\newcommand{\N}{\mathbb{N}}
\newcommand{\Z}{\mathbb{Z}}
\newcommand{\E}{\mathbb{E}}
\newcommand{\Q}{\mathbb{Q}}
\newtheorem{lemma}{Lemma}
\newtheorem{theorem}{Theorem}
\numberwithin{equation}{section}
\newcommand{\C}{\mathcal{C}}
\renewcommand{\E}{\mathbb{E}}
\newcommand{\Pp}{\mathbb{P}}
\newcommand{\F}{{\mathcal F}}
\begin{document}

\title{Anisotropic oriented percolation  in high dimensions}

\author{
Pablo Almeida Gomes \thanks{ICEx,  University of Minas Gerais, Minas Gerais, Brazil, pabloag@ufmg.br}
\and
Alan Pereira \thanks{ICEx,  University of Minas Gerais, Minas Gerais, Brazil, alanand@ufmg.br}
\and
Remy Sanchis \thanks{ICEx,  University of Minas Gerais, Minas Gerais, Brazil, rsanchis@mat.ufmg.br}}

\date{}

\maketitle

\begin{abstract}
In this paper we study anisotropic oriented percolation on $\Z^d$ for $d\geq 4$ and show that the local condition for  phase transition is closely related to the mean-field condition. More precisely, we show that if the sum of  the local probabilities is strictly greater than one and each probability is not too large, then percolation occurs.

\emph{Keywords}: Anisotropic percolation, high dimensional systems, asymmetric random walk, mean-field. 
\newline 
MSC 2010 subject classifications. 82B43, 60K35

\end{abstract}

\section{Introduction}

A common feature of lattice systems is that they approach, in some sense, a mean-field behavior as the dimension of the lattice grows. 

In the particular case of oriented percolation, the seminal paper by Cox and Durrett \cite{CD} shows, among other important results, that the asymptotic behavior of the critical parameter is $1/d$. A little earlier, Holley and Liggett \cite{HL} proved the occurrence  of an analogous behavior for the high dimensional contact process critical rate and, recently, a similar result was proven for the contact process with random rates on a high dimensional percolation open cluster, see \cite{XX}.  For the non-oriented case, Kesten \cite{K} and Gordon \cite{G} independently showed that the critical parameter is asymptotically $1/2d$ and, in the last three decades, a rather complete mean-field picture of high dimensional non-oriented percolation has emerged; see \cite{R} and references therein. 

For $d\geq 3$,  little is understood about the phase diagram of anisotropic percolation. The results of \cite{CD,K,G} are statements about one point in the parameter domain, the isotropic point, whereas in \cite{CLS} the authors show that the critical surface is everywhere continuous for a particular setup in the non-oriented case. The behavior on the border of the domain, related to the so-called dimensional crossover phenomenon, is  studied in \cite{SS}, but still with quite modest results.

In this paper we analyze the interior of the domain in a region containing the isotropic point, showing that the critical surface behaves nicely around this point and that the anisotropy introduced in the system does not create any unexpected behavior.

\subsection{The model}

We will consider the anisotropic oriented edge percolation model in $\Z^d$. Let  $\{e_1, \dots, e_d\}$ be the set of canonical unit vectors of $\Z^d$. Given $0< p_1, \ldots, p_d <1$, we declare each edge $\langle x, x+e_i \rangle$ to be open independently of each other with probability $p_i$ for $i=1,\dots,d$. We will denote the corresponding probability measure simply by $\Pp$.

An {\it oriented path} of length $n$ starting at the origin  in $\Z^d$ is a path $(x_0,x_1,\dots, x_n)$ such that $x_0=0$ and $x_i-x_{i-1}\in \{e_1, \ldots, e_d\}$ for $i=1,\dots, n$. Let $C_0$ be the open cluster of the origin, that is, the set of vertices $x \in \Z^d$ such that there is an open path from $0$ to $x$; we let $|C_0|$ denote the size of $C_0$.

We are now ready to state our main theorem.




\begin{theorem}\label{main}
Let $\varepsilon > 0$. For $d \geq 4$, let $p_1, \ldots, p_d$ be non-negative numbers such that

\begin{enumerate}
\item $p_1+\cdots+ p_d \geq 1+\varepsilon$,

\item $\underset{1\leq i\leq d}{\max}\left\{ \displaystyle\frac{p_i}{p_1 + \cdots + p_d} \right\} < \left \lceil \dfrac{10}{\varepsilon}  \right \rceil^{-1}$,
\end{enumerate}
then
\[\Pp\big\{|C_0| =\infty \big\}>0.\]
\end{theorem}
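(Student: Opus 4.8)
I would prove $\Pp\{|C_0|=\infty\}>0$ by a second moment (Paley--Zygmund) argument applied to $N_n:=$ the number of open oriented paths of length $n$ issued from the origin. First, note that an oriented path of length $n+1$ restricts to one of length $n$, so the events $\{N_n\ge 1\}$ are decreasing; since the tree of open oriented paths from $0$ is locally finite (each vertex has at most $d$ children) and a ray of it visits infinitely many sites, König's lemma gives $\{|C_0|=\infty\}=\bigcap_n\{N_n\ge 1\}$, hence $\Pp\{|C_0|=\infty\}=\lim_n\Pp\{N_n\ge 1\}$. By Cauchy--Schwarz $\Pp\{N_n\ge1\}\ge(\E N_n)^2/\E[N_n^2]$, and since an oriented path of length $n$ is exactly a step sequence in $\{1,\dots,d\}^n$ and is open with probability $\prod p_{i_k}$, one has $\E[N_n]=\lambda^n$ with $\lambda:=p_1+\cdots+p_d\ge 1+\varepsilon$. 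So the whole theorem reduces to the uniform bound
\[
\E[N_n^2]\ \le\ C(\varepsilon)\,\lambda^{2n}\qquad\text{for all }n
\]
(equivalently, $L^2$-boundedness of the nonnegative martingale $N_n/\lambda^n$).

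\textbf{The second moment as a walk functional.} Writing $p(\langle x,x+e_i\rangle)=p_i$ and expanding $\E[N_n^2]=\sum_{\gamma,\gamma'}\prod_{e\in\gamma\cup\gamma'}p(e)$ over ordered pairs of length-$n$ oriented paths, factoring out $\prod_{e\in\gamma}p(e)\prod_{e\in\gamma'}p(e)$ yields
\[
\E[N_n^2]=\lambda^{2n}\,\E\Big[\textstyle\prod_{e\in\gamma\cap\gamma'}p(e)^{-1}\Big],
\]
where now $\gamma,\gamma'$ are \emph{independent} random walk paths with step law $q_i:=p_i/\lambda$. I would dissect the coincidence structure of the two walks: the shared edges come in \emph{runs} (maximal stretches of steps during which the walks sit at a common site and take the same step), and consecutive runs are separated by excursions during which the difference $\gamma'_k-\gamma_k$ performs a random walk $\mu$ on the rank-$(d-1)$ sublattice $\{v:\sum_iv_i=0\}$, with increments $e_b-e_a$ ($a\ne b$) of probability $q_aq_b/(1-\beta)$, where $\beta:=\sum_iq_i^2$; a new run begins precisely when this difference walk hits $0$. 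The key arithmetic is that along a run the blow-up $p_i^{-1}=(\lambda q_i)^{-1}$ incurred at a step of direction $i$ is exactly offset by the chance $\asymp q_i$ of the walks making that matching step, so a run of length $\ell$ contributes $\lambda^{-\ell}$ in expectation; summing the geometric series, one run costs $A:=\frac{(1-\beta)\lambda}{\lambda-1}$ in expectation (here $\beta<1/\lambda$ because each $p_i<1$). Chaining runs and excursions by the strong Markov property gives
\[
\E\Big[\textstyle\prod_{e\in\gamma\cap\gamma'}p(e)^{-1}\Big]\ \le\ \frac{A(1-\bar r)}{1-A\bar r}\qquad\text{provided }A\bar r<1,
\]
with $\bar r:=\Pp(\mu\text{-walk ever returns to }0)$. (Equivalently, organize $\E[N_n^2]$ directly by the set of levels at which $\gamma,\gamma'$ coincide and sum the resulting series.)

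\textbf{The decisive estimate: $A\bar r<1$.} Since $\lambda\ge 1+\varepsilon$ we have $A\le\frac{\lambda}{\lambda-1}\le\frac{1+\varepsilon}{\varepsilon}$, so it suffices that $\bar r\le\frac{\varepsilon}{1+\varepsilon}$, i.e.\ that the Green function $G_\mu(0,0)=\sum_{t\ge0}\Pp_0(\mu\text{-walk}_t=0)$ satisfies $G_\mu(0,0)\le 1+\varepsilon$, i.e.\ $\sum_{t\ge2}\Pp_0(\mu\text{-walk}_t=0)\le\varepsilon$. Setting $\phi(\theta)=\sum_jq_je^{i\theta_j}$, one computes $\widehat\mu(\theta)=\frac{|\phi(\theta)|^2-\beta}{1-\beta}$ and $1-\widehat\mu(\theta)=\frac{1}{1-\beta}\sum_{j<l}q_jq_l\,|e^{i\theta_j}-e^{i\theta_l}|^2\ge0$, vanishing only on the diagonal; hence $\sum_{t\ge2}\Pp_0(\mu\text{-walk}_t=0)$ is an explicit Fourier integral of $\widehat\mu(\theta)^2/(1-\widehat\mu(\theta))$ over the Brillouin zone of the sublattice. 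I would bound it by splitting off a small neighbourhood of the diagonal — where $1-\widehat\mu(\theta)$ is comparable to the quadratic form $\mathrm{Var}_q(\eta)=\sum_jq_j\eta_j^2-(\sum_jq_j\eta_j)^2$ and the integral converges precisely because the sublattice has rank $d-1\ge3$ — from its complement, where $1-\widehat\mu$ is bounded below and $\widehat\mu(\theta)^2$ is small on average, $\E_\theta[\widehat\mu^2]=\Pp_0(\mu\text{-walk}_2=0)\le\beta^2/(1-\beta)^2$. The hypotheses enter here: $\max_iq_i=\max_ip_i/\lambda<\lceil 10/\varepsilon\rceil^{-1}$ together with the automatic bound $q_i\le 1/\lambda\le(1+\varepsilon)^{-1}$ force the step distribution of $\mu$ to be spread over $\gtrsim\varepsilon^{-1}$ directions and genuinely $(\ge4)$-dimensional, which makes both pieces $\le\varepsilon/2$ after the neighbourhood size is optimized; the interplay of the small-time contribution (of order $\max_iq_i$) with the large-time tail (of order $t^{-(d-1)/2}$) is what fixes the numerical constant $10$. (Equivalently, bound $\Pp_0(\mu\text{-walk}_t=0)=\Pp(\text{two i.i.d.\ size-}t\text{ samples from }q\text{ with matched coordinates removed have equal multisets})$ by a bound of order $\max_iq_i$ for small $t$ and a local-CLT bound of order $t^{-(d-1)/2}$ for large $t$, and sum.) I expect this quantitative spread-out/Green-function estimate, and the tracking of constants through it, to be the main obstacle; the rest is soft or routine.
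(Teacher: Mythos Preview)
Your architecture is exactly the paper's: set $W_n=N_n/\lambda^n$, reduce $\Pp\{|C_0|=\infty\}>0$ to $\sup_n\E[W_n^2]<\infty$, and convert the second moment into a Green-type sum for two i.i.d.\ oriented walks with step law $q_i=p_i/\lambda$. Your run/excursion bookkeeping (tracking shared \emph{edges}) is a cosmetic variant of the paper's (tracking shared \emph{vertices}): the paper decomposes pairs of paths by the levels at which they coincide, gets the renewal identity $\sum_na_n=\sum_j(\sum_mb_m)^j$, and reduces to $\sum_{k\ge1}\Q(S^1_k=S^2_k)\le\lambda-1$. Unwinding $G_{\text{lazy}}=(1-\beta)^{-1}G_{\text{non-lazy}}$ shows your condition $A\bar r<1$ (equivalently $G_\mu(0,0)\le(1-\beta)\lambda/(1-\beta\lambda)$) is actually slightly \emph{weaker} than the paper's, so up to this point the two arguments coincide in substance and yours is marginally sharper.

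The real divergence is at the Green-function estimate, and this is where you have a genuine gap. You propose Fourier analysis or a local-CLT tail bound and correctly flag it as the main obstacle, but you do not carry it out, and getting the explicit constant $10$ this way for an \emph{arbitrary} probability vector with only $\max_iq_i<1/m$ is delicate: near the diagonal your integrand is governed by the quadratic form $\mathrm{Var}_q(\eta)$, and under the hypotheses some $q_i$ may be arbitrarily small, so there is no uniform lower bound on the smallest nonzero eigenvalue of $\mathrm{Cov}(q)$ and the naive quadratic comparison degenerates. The paper sidesteps all of this with a monotonicity trick you are missing: among all probability vectors with $\max_iq_i\le 1/m$, the quantity $\lambda(q)=\sum_{k\ge1}\Q(S^1_k=S^2_k)$ is \emph{maximised} at the extremal point $q^*=(1/m,\dots,1/m,0,\dots,0)$. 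The proof is a two-coordinate ``packing'' step---transfer mass between $q_1$ and $q_2$ keeping $q_1+q_2$ fixed, observe that only the $2$-dimensional projection changes, and reduce to the elementary fact that the return probability of a one-dimensional lazy walk with holding probability $x=(q_1^2+q_2^2)/(q_1+q_2)^2$ is increasing in $x$ on $[1/2,1]$---iterated until every coordinate is $0$ or $1/m$. This collapses the entire anisotropic estimate to the single symmetric case $q^*$, where a direct Stirling bound on the maximal multinomial coefficient gives $\lambda(q^*)\le 10/m$ with no Fourier analysis and no local CLT. That monotonicity lemma is the idea that would close your gap, and it is considerably cleaner for tracking the constant than the route you sketch.
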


{\bf Remark:} Note that if $p_1+\cdots+ p_d < 1$ then a straightforward branching process argument shows that $\Pp\big\{|C_0| =\infty \big\}=0$. Note also that, although the result is non-asymptotic, it only makes sense for $d$ of order $1/\varepsilon$. We will see in the course of the proof of Theorem \ref{main} (more precisely on the remark just bellow  \eqref{equation:cotastirling}) that, for any $C<1,$ the upper bound of Condition 2 above could be taken as $C\varepsilon$ , as long as the dimension is taken big enough. We  also mention that, for the isotropic case,  Theorem \ref{main} gives the bound $p_c(\mathbb{Z}^d) \leq 1/d + 10/d^2$.

The strategy of the proof is similar to the one in  Cox-Durrett  \cite{CD}: we build a martingale and prove the convergence to a positive limit showing that it has bounded second moment. The main difficulty here is to estimate the second moments of the martingale. We do this by converting the martingale problem into a random walk problem and comparing the asymmetric case with the symmetric one.

\subsection{Proof of main result}

In this section we prove the main result modulo a lemma which is stated in the course of the proof.

\begin{proof} [Theorem \ref{main}] 
We want to prove that the open cluster of the origin is infinite, which happens if and only if there exists an infinite  open path starting at the origin. Thus our problem can be naturally converted to a counting of the number of open paths starting on $0$, as long as we have a good control of the second moment of this random variable.

For each $n \in \N$ let $V_n$ be the set of the vertices in the $n$-th level , i.e.,
\begin{equation}
V_n = \{x \in \Z^d: x_1 + \cdots + x_d = n, \: x_i \geq 0 \}, 
\end{equation} 
and $\mathcal{C}_n$ to be the set of all possible oriented paths from the origin to $V_n$, i.e.,
\begin{equation}
\mathcal{C}_n = \{ \gamma = (0,v_1,v_2, \ldots, v_n) : v_j - v_{j-1} \in V_1; \ \ j=1, \ldots, n\} 
\end{equation}

We define $X_n$ to be the random variable which counts the open paths from the origin up to level $n$, i.e.,
\begin{equation} \label{Def:Xn}
X_n := \sum_{\gamma \in \C_n}  1_{\{\gamma \ \ \textrm{is open}\}},
\end{equation} 
and write $\mu := \E[X_1] = p_1+ \cdots + p_d$. A simple calculation shows that $\E[X_n] = \mu^n$. Now, define
\begin{equation} \label{Def:Wn}
W_n := \frac{X_n}{\mu^n}.
\end{equation}

We observe that $\{ W_n \}_{n \in \N}$ is a positive martingale. For this, consider $x \in V_n$ and let $\C_x = \{ \gamma \in \C_n: f(\gamma)=x\}$, where $f(\gamma)$ denotes the final vertex of $\gamma$. Define also the random variables
\[ Y_x = \sum_{i=1}^d 1_{\{\langle x, x+e_i \rangle \ \ \textrm{is open}\}} \ \ \mbox{and} \ \  N_x = \sum_{\gamma \in \C_x} 1_{ \{ \gamma \ \ \textrm{is open}\}},\]
where $Y_n$ counts the number of oriented open edges leaving $x$ and  
$N_x$ counts the number of oriented open paths from 0 to $x$, respectively.
Observe that
\begin{equation} \label{equation:XnNx}
X_n = \sum_{x \in V_n} N_x  \ \ \mbox{and} \ \ X_{n+1} = \sum_{x \in V_n} N_x \cdot Y_x.
\end{equation}
Let $\F_n = \sigma(X_1, \ldots, X_n)$; observe that $Y_x$ is independent of $\F_n$ for each $x \in V_n$ and that $N_x$ is $\F_n$-measurable. We also have $\E[Y_x] =\mu$ for all $x$, so
\begin{align*} 
\E[X_{n+1} |\F_n] &= \sum_{x \in V_n}\E[ N_x \cdot Y_x| \F_n] = \mu X_n.
\end{align*}
 Thus $\E[W_{n+1} |\F_n] = W_n$, as wanted.
 
 Since $W_n$ is a positive martingale, it converges to a non-negative random variable $W$. Assume for the moment the following lemma.

\begin{lemma}  \label{lemma:main}
Let $\{W_n\}_n$ be as defined in (\ref{Def:Wn}). Then, under the conditions of Theorem \ref{main} we have
\[ \sup_n \E[W_n^2]<\infty. \]
\end{lemma}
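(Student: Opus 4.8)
The plan is to compute $\E[W_n^2]$ explicitly and bound it uniformly in $n$. First I would write
\[
\E[X_n^2] = \sum_{\gamma,\gamma' \in \C_n} \Pp(\gamma \text{ and } \gamma' \text{ are open}),
\]
and note that for a pair of oriented paths $\gamma,\gamma'$ the event that both are open has probability $\prod_{e} p(e)$, where the product is over the set of edges in $\gamma \cup \gamma'$, each counted once. Since both paths go from $0$ to level $n$, they share an initial common portion (a single path, because oriented paths that split cannot re-merge in one extra step — actually they \emph{can} re-merge, so more carefully the shared edge set is some sub-forest); the cleanest bookkeeping is to track, for each level $k$, whether $\gamma$ and $\gamma'$ occupy the same vertex. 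Writing $R_k$ for the (random, over the choice of pair) indicator that the two paths coincide at level $k$, the probability that both are open factorizes over the "coincidence blocks", and summing over all pairs one gets
\[
\E[W_n^2] = \frac{1}{\mu^{2n}}\sum_{\gamma,\gamma'}\Pp(\gamma,\gamma'\text{ open}) = \sum_{\text{patterns}} (\text{number of pairs with that coincidence pattern}) \cdot \mu^{-(\text{edges used})}.
\]

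The key reduction — and this is the device the authors advertise — is to recognize this sum as an expectation for a random walk. Concretely, let $S$ be the difference of two independent copies of the walk on $V_1$ that chooses direction $e_i$ with probability $p_i/\mu$; then $\E[W_n^2]$ should be expressible (up to the $\mu$-bookkeeping for the common edges, which is where $\mu > 1$, i.e. Condition 1, enters as a geometric damping factor $1/\mu$ per coincidence) in terms of $\sum_{k} \Pp(S \text{ returns to } 0 \text{ at time } k)$-type quantities, i.e. a Green's function. So the bound $\sup_n \E[W_n^2] < \infty$ reduces to showing that the expected number of returns to the origin of this symmetrized difference walk is finite, with a geometric weight $\mu^{-1}<1$ helping at each return. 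I would then compare the anisotropic difference walk to the isotropic (symmetric) one: the step distribution of $S$ has increments in $\{e_i - e_j\}$, and I would bound its return probabilities by those of the symmetric simple-random-walk difference in $d$ dimensions, using Condition 2 to control how far the anisotropic weights $p_i/\mu$ can deviate from $1/d$. Here a local CLT / Fourier estimate $\Pp(S_k = 0) \lesssim k^{-(d-1)/2}$ (the walk lives on the hyperplane $\sum x_i = \text{const}$, hence effective dimension $d-1$) makes the Green's function summable precisely when $d - 1 \geq 3$, i.e. $d \geq 4$, matching the hypothesis; the Stirling-type estimate flagged in the remark after \eqref{equation:cotastirling} is presumably exactly this local estimate together with the combinatorial count of paths.

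The main obstacle I expect is the bookkeeping that converts the double path-sum into a clean random-walk expectation while correctly accounting for shared edges: when $\gamma$ and $\gamma'$ agree on a stretch, those edges are counted once in the probability but the normalization $\mu^{2n}$ "expects" them twice, producing the crucial factors of $\mu^{-1} < 1$ per common edge, whereas when the paths are at distinct vertices the two independent walks each contribute a factor $p_i/\mu$ summing to $1$. Getting this factorization exactly right — and then bounding the resulting series $\sum_k (\text{combinatorial factor}) \cdot \mu^{-(\text{number of meeting times up to }k)} \cdot \Pp(\text{difference walk at }0\text{ at time }k)$ uniformly in $n$ using the two hypotheses — is the heart of the argument. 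The anisotropy-to-isotropy comparison (Condition 2) and the summability threshold $d\geq 4$ (effective dimension $\geq 3$) are then the two remaining quantitative inputs.
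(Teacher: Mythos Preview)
Your overall architecture matches the paper: express $\E[W_n^2]$ as a sum over pairs of paths, decompose by the set of coincidence levels, and convert to a statement about the collision Green's function $\lambda(q) := \sum_{k\geq 1} \Q(S^1_k = S^2_k)$ of two independent oriented walks with step law $q_i = p_i/\mu$. The paper carries out exactly this reduction and arrives at the criterion $\sup_n \E[W_n^2] < \infty \iff \lambda(q) \leq \mu - 1$, so your picture of ``a factor $1/\mu$ per coincidence'' is correct and the remaining task is the \emph{quantitative} bound $\lambda(q) \leq \varepsilon$ (not merely $\lambda(q)<\infty$).

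The gap is in your comparison step. You propose to bound the anisotropic collision probabilities from above by those of the symmetric $d$-dimensional walk, reading Condition~2 as controlling how far $q_i$ is from $1/d$. Both parts fail. Condition~2 says only $\max_i q_i < 1/m$ with $m = \lceil 10/\varepsilon \rceil$; it gives no lower bound on any $q_i$, so all but $m$ coordinates may vanish and $q$ need not be anywhere near uniform. More seriously, the inequality points the wrong way: concentrating mass \emph{raises} collision probabilities (already $\Q(S^1_1=S^2_1)=\sum_i q_i^2$ is minimized at the uniform vector), so the isotropic $d$-walk furnishes a lower bound for $\lambda$, not an upper bound; equivalently, a local-CLT estimate $C(q)\,k^{-(d-1)/2}$ has $C(q)$ governed by the smallest eigenvalue of the step covariance and blows up as $q$ degenerates. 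The paper's remedy is to compare in the opposite direction, to the \emph{most} concentrated admissible vector $q^*=(1/m,\dots,1/m,0,\dots,0)$: an iterative two-coordinate ``packing'' move, combined with a monotonicity lemma for the return probability of a lazy one-dimensional walk, shows $\lambda(q)\leq\lambda(q^*)$ whenever $\max_i q_i\leq 1/m$, and a direct Stirling computation on multinomial coefficients then gives $\lambda(q^*)\leq 10/m \leq \varepsilon$. Note that the effective dimension in the estimate is $m$, not $d$, and that it is precisely the bound $10/m$ that dictates the constant $\lceil 10/\varepsilon\rceil$ in Condition~2.
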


From this lemma it follows that $W_n$ converges to $W$  in $L_1$ and since $\E(W_n)=1$ we have $\Pp(W>0)>0$. Noticing that $W>0$ implies $X_n>0$ for all $n$, the theorem follows. 
\end{proof}

 The proof of Lemma \ref{lemma:main} in the anisotropic case requires more than a direct adaptation of Cox-Durrett results. The next sections are dedicated to this work. 
 
 The structure of the remainder of the paper is the following: in Section \ref{preliminary} we convert the martingale problem of Lemma \ref{lemma:main} into a random walk problem, in Section \ref{rw} we compare asymmetric random walks with the symmetric case to finish the proof of Lemma \ref{lemma:main} and in Section \ref{final} we make some final remarks.

 \section{Preliminary Lemmas}\label{preliminary}

In this section we state and prove four lemmas with the goal of converting the martingale problem of Lemma \ref{lemma:main} into a random walk problem.

\subsection{Open paths equivalencies}

In the first lemma we give a criterion for bounding $\sup_m \E[W_m^2]$. To do that, we introduce the following notation. Given a path $\gamma=(0,v_1, \ldots, v_n) \in \C_n$, we define, for each $i \in \{1, \ldots, n\}$,
\begin{equation}
i(\gamma) := v_i \ \ \mbox{and} \ \ f(\gamma):=v_n.
\end{equation}

\begin{lemma} \label{an-iff-Wn}
Let
\begin{equation} \label{Def:an}
a_n := \frac{1}{\mu^{2n}} \sum_{(\gamma_1, \gamma_2) \in \C_n^2, f(\gamma_1)=f(\gamma_2)} \Pp(\gamma_1, \gamma_2 \ \ \mbox{open}).
\end{equation}
Then
\[ \sup_m \E[W_m^2] < \infty\; \text{ iff }\; \sum_{n=1}^{\infty} a_n < \infty.\]
\end{lemma}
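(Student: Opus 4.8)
The plan is to compute $\E[W_n^2]$ explicitly by expanding the square and comparing it with the quantity $\sum_{k\le n} a_k$. First I would write, directly from the definition \eqref{Def:Xn},
\[
\E[X_n^2] = \sum_{(\gamma_1,\gamma_2)\in\C_n^2} \Pp(\gamma_1,\gamma_2 \text{ open}),
\]
so that $\E[W_n^2] = \mu^{-2n}\sum_{(\gamma_1,\gamma_2)\in\C_n^2}\Pp(\gamma_1,\gamma_2\text{ open})$. The key observation is that a pair of oriented paths $(\gamma_1,\gamma_2)$ contributes a product of edge probabilities over the \emph{union} of their edge sets; if the two paths have met and separated, the structure is that of a common initial coalescent piece followed by two disjoint excursions. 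The natural way to organize the sum is to classify a pair $(\gamma_1,\gamma_2)\in\C_n^2$ by the last level $k$ at which they coincide, i.e.\ the largest $k$ with $k(\gamma_1)=k(\gamma_2)$ (taking $k=0$ if they only share the origin, and $k=n$ if $f(\gamma_1)=f(\gamma_2)$).

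Next I would carry out the decomposition. For a pair that last meets at level $k$ at a common vertex $x\in V_k$, the paths agree on a coalescent path $\gamma\in\C_k$ with $f(\gamma)=x$, and then continue as two paths $\gamma_1',\gamma_2'$ of length $n-k$ that are edge-disjoint and in fact vertex-disjoint after $x$ (their first steps differ). Independence across disjoint edge sets gives
\[
\Pp(\gamma_1,\gamma_2\text{ open}) = \Pp(\gamma\text{ open})\,\Pp(\gamma_1'\text{ open})\,\Pp(\gamma_2'\text{ open}).
\]
Summing first over the two excursions while ignoring the disjointness constraint only overcounts, so
\[
\sum_{\substack{(\gamma_1,\gamma_2)\in\C_n^2 \\ \text{last meet at level }k}} \Pp(\gamma_1,\gamma_2\text{ open}) \;\le\; \Big(\sum_{\gamma\in\C_k,\,f(\gamma)=x,\ x\in V_k}\Pp(\gamma\text{ open})\Big)\cdot \mu^{n-k}\cdot\mu^{n-k},
\]
where I used $\E[X_{n-k}]=\mu^{n-k}$ for each free excursion; but the inner sum over $\gamma$ grouped by endpoint $x$ is exactly $\mu^{2k}a_k$. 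Dividing by $\mu^{2n}$ and summing over $k=0,\dots,n$ yields $\E[W_n^2]\le \sum_{k=0}^{n} a_k$ (with $a_0=1$), which gives the ``$\Leftarrow$'' direction. For ``$\Rightarrow$'', I would keep the disjointness constraint and note that in the reverse direction one gets a lower bound: restricting to $k=n$ already shows $a_n\le \E[W_n^2]$, hence $\sum_{n=1}^N a_n \le \sum_{n=1}^N \E[W_n^2] \le N\sup_m\E[W_m^2]$ — which is not enough. So instead I would argue that if $\sum_n a_n=\infty$ then, using the decomposition as an exact identity with the vertex-disjointness constraint and a positivity/subadditivity argument (the contribution of excursions that avoid each other is at least a fixed constant fraction, by an FKG- or counting-type estimate, of the unconstrained $\mu^{2(n-k)}$ when $k$ is not too close to $n$), one obtains $\E[W_n^2]\ge c\sum_{k\le n/2}a_k \to\infty$.

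The main obstacle I anticipate is the lower bound in the ``$\Rightarrow$'' direction: converting the inequality $\E[W_n^2]\le\sum_k a_k$ into a matching lower bound requires showing that the vertex-disjointness constraint on the two excursions costs only a bounded factor. I would handle this by the standard device of forcing the two excursions to live in disjoint half-spaces right after the splitting vertex (one excursion takes a fixed extra $e_1$-step worth of "budget", the other an $e_2$-step), which decouples them at the price of a constant factor and a shift in level, so that $a_k$ reappears up to multiplicative constants. If a clean two-sided bound proves awkward, it suffices for the iff to observe that $\sup_m\E[W_m^2]<\infty$ is equivalent to $\sum_n\big(\E[W_{n}^2]-\E[W_{n-1}^2]\big)<\infty$ and to identify this increment, via the last-meeting decomposition at the \emph{top} level, with $a_n$ up to constants and lower-order terms; this is the route I would pursue first.
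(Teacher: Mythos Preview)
Your upper bound for the ``$\Leftarrow$'' direction is essentially sound, although the write-up is garbled: a pair $(\gamma_1,\gamma_2)$ whose last common level is $k$ need \emph{not} coincide on $[0,k]$ (they may separate and rejoin), so your ``coalescent path $\gamma$'' language and the displayed inequality with a single sum $\sum_{\gamma\in\C_k}\Pp(\gamma\text{ open})$ are incorrect as written. What you actually need---and what makes your identification with $\mu^{2k}a_k$ valid---is the sum over \emph{pairs} $(\alpha_1,\alpha_2)\in\C_k^2$ with $f(\alpha_1)=f(\alpha_2)$. With that fix, $\E[W_n^2]\le\sum_{k=0}^n a_k$ is fine.

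The real gap is in the ``$\Rightarrow$'' direction. Your two-sided estimate would require that the probability of two excursions of length $n-k$ avoiding each other is bounded below uniformly, i.e.\ that two independent oriented walks associated to $q$ fail to meet with positive probability. In the oriented setting your half-space decoupling does not work: after one excursion takes an $e_1$-step and the other an $e_2$-step, both still move in the positive orthant and can meet at any later level, so there is no half-space separation available. More fundamentally, the lemma is an \emph{exact} equivalence valid in every dimension (for $d\le 3$ both sides are infinite), whereas any argument that hinges on a uniform non-meeting bound is dimension-dependent and would fail for $d\le 3$; this shows your route cannot give the clean iff.

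The paper avoids all of this by exploiting the martingale structure directly. Writing $X_{n+1}=\sum_{x\in V_n} N_xY_x$ and conditioning on $\F_n$, one computes
\[
\E[X_{n+1}^2\mid\F_n]=\mu^2 X_n^2+(\mathrm{Var}\,Y_0)\sum_{x\in V_n}N_x^2,
\]
and since $\E\big[\sum_{x}N_x^2\big]=\mu^{2n}a_n$, this yields the \emph{exact} recursion $\E[W_{n+1}^2]=\E[W_n^2]+\frac{\mathrm{Var}\,Y_0}{\mu^2}\,a_n$, hence $\E[W_{n+1}^2]=\E[W_1^2]+\frac{\mathrm{Var}\,Y_0}{\mu^2}\sum_{j=1}^n a_j$, from which the iff is immediate with no inequalities, no disjointness constraints, and no dimension hypothesis. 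Your very last sentence (compute the increment and match it with $a_n$) is pointing exactly here; you should pursue that first, via the conditional variance, rather than the last-meeting decomposition.
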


\begin{proof}
By (\ref{equation:XnNx}) we have
\begin{align*}
\E[X_{n+1}^2| \F_n] = \E \left[  \left. \sum_{(x,y) \in V_n^2} N_x Y_x N_y Y_y \right| \F_n \right] = \mu^2 X_n^2 + ({\text Var} {Y_0}) \sum_{ x \in V_n} N_x^2.
\end{align*}
 Therefore
\begin{align*}
\E[W_{n+1}^2] &= \frac{\mu^2 \E[X_n^2] }{\mu^{2n+2}} + \frac{({\text Var} {Y_0})}{\mu^2} \frac{\E \left[\sum_{ x \in V_n} N_x^2 \right] }{\mu^{2n}}.
\end{align*}
Using the definition of $N_x$ we can see that $\E[N_x^2 ] = \displaystyle \sum_{(\gamma_1, \gamma_2) \in \C_x^2} \Pp(\gamma_1, \gamma_2 \ \ \mbox{open})$
so
\begin{align*}
\frac{\E \left[\sum_{ x \in V_n} N_x^2 \right] }{\mu^{2n}} &= \frac{\sum_{ x \in V_n} \sum_{(\gamma_1, \gamma_2) \in \C_x^2} \Pp(\gamma_1, \gamma_2 \ \ \mbox{open})  }{\mu^{2n}} \\
&= \frac{1}{\mu^{2n}} \sum_{(\gamma_1, \gamma_2) \in \C_n^2, f(\gamma_1)=f(\gamma_2)} \Pp(\gamma_1, \gamma_2 \ \ \mbox{open}),
\end{align*}
which is exactly the definition of $a_n$. Hence
\begin{align*}
 \E[W_{n+1}^2] = \E[W_n^2] + \frac{({\text Var} {Y_0}) }{\mu^2} a_n.
\end{align*}
Iterating the recursion above, we have
\begin{equation*}
\E[W_{n+1}^2] = \E[W_1^2] + \frac{({\text Var} {Y_0}) }{\mu^2} \sum_{j=1}^n a_j,
\end{equation*}
and the result follows.
\end{proof}

Given $m \in \N$, let
\begin{equation} \label{Def:An}
A_m := \{(\gamma_1,\gamma_2) \in \C_m^2: i(\gamma_1) \neq i(\gamma_2) \ \ \text{ for all } i \neq m \ \ \mbox{and}  \ \ f(\gamma_1)= f(\gamma_2) \}
\end{equation}
be the set of the pair of paths of size $m$ which meet on the their final vertices and define
\begin{equation} \label{Def:bn}
b_m := \sum_{(\gamma_1, \gamma_2) \in A_m}  \frac{\Pp(\gamma_1, \gamma_2 \ \ \mbox{open})}{\mu^{2m}} .
\end{equation}

\begin{lemma}
Let $a_n$ as defined in (\ref{Def:an}) and $b_m$ as defined in (\ref{Def:bn}). Then
\begin{equation}
\sum_{n=1}^{\infty} a_n = \sum_{j=1}^{\infty}\left( \sum_{m=1}^{\infty} b_m \right)^j.
\end{equation}
\end{lemma}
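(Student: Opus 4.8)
The plan is to decompose a pair of oriented paths that meet at level $n$ into a concatenation of independent ``excursions'', each of which is a pair of paths meeting only at its two endpoints, and then to recognise the resulting double sum as the sum of convolution powers of the sequence $(b_m)$.

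First I would set up the combinatorial bijection. Fix $n\geq 1$ and a pair $(\gamma_1,\gamma_2)\in\C_n^2$ with $f(\gamma_1)=f(\gamma_2)$. Let $0=k_0<k_1<\cdots<k_j=n$ be the (finite, non-empty) list of all levels $\ell\in\{0,1,\dots,n\}$ at which the two paths coincide, i.e.\ $\ell(\gamma_1)=\ell(\gamma_2)$ in the notation $i(\gamma):=v_i$ (here $\ell=0$ is the origin, which is always common, and $\ell=n$ is common by hypothesis). Put $m_r:=k_r-k_{r-1}\geq 1$, so that $m_1+\cdots+m_j=n$. The $r$-th excursion is the pair of sub-paths of $\gamma_1,\gamma_2$ run between levels $k_{r-1}$ and $k_r$, translated by the common vector $\gamma_1(k_{r-1})=\gamma_2(k_{r-1})$ so as to start at the origin; since its two components agree at level $m_r$ and at no strictly intermediate level, this translated excursion lies in $A_{m_r}$. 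Conversely, given $j\geq 1$, a composition $n=m_1+\cdots+m_j$ with each $m_r\geq 1$, and excursions $(\delta_1^{(r)},\delta_2^{(r)})\in A_{m_r}$, concatenating them produces a unique pair $(\gamma_1,\gamma_2)\in\C_n^2$ with $f(\gamma_1)=f(\gamma_2)$ and with exactly the prescribed set of common levels. This is the bijection.

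Next I would verify that the weight $\Pp(\gamma_1,\gamma_2\ \mbox{open})/\mu^{2n}$ factorises along this decomposition. Since $\sum_r m_r=n$ we have $\mu^{2n}=\prod_{r=1}^j\mu^{2m_r}$. The event $\{\gamma_1,\gamma_2\ \mbox{open}\}$ is the intersection over $r=1,\dots,j$ of the events that both sub-paths of the $r$-th excursion are open; the edges used by the $r$-th excursion all join level $k_{r-1}$ to level $k_r$, so the sets of edges involved in distinct excursions are disjoint, and since distinct edges are declared open independently these events are independent. Together with the translation invariance of $\Pp$ this yields $\Pp(\gamma_1,\gamma_2\ \mbox{open})=\prod_{r=1}^j\Pp(\delta_1^{(r)},\delta_2^{(r)}\ \mbox{open})$. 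Combining this with the bijection and the definitions (\ref{Def:an}) and (\ref{Def:bn}) gives
\[
a_n=\sum_{j=1}^{n}\ \sum_{\substack{m_1+\cdots+m_j=n\\ m_r\geq 1}}\ \prod_{r=1}^{j} b_{m_r}.
\]

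Finally I would sum over $n$ and rearrange. Every term is non-negative, so Tonelli's theorem permits an arbitrary reordering of the summation (both sides may a priori equal $+\infty$), and
\[
\sum_{n=1}^{\infty} a_n=\sum_{j=1}^{\infty}\ \sum_{m_1,\dots,m_j\geq 1}\ \prod_{r=1}^{j} b_{m_r}=\sum_{j=1}^{\infty}\left(\sum_{m=1}^{\infty} b_m\right)^{j},
\]
which is the asserted identity. The only point that really requires care is the bijection in the first step: one must correctly identify the full set of common levels so that each translated excursion genuinely belongs to $A_{m_r}$ (the two sub-paths of a single excursion may well share edges, which is harmless because $b_m$ is already defined through the joint probability $\Pp(\gamma_1,\gamma_2\ \mbox{open})$), while the factorisation of probabilities across distinct excursions rests on the disjointness of the edge sets sitting between different pairs of consecutive levels.
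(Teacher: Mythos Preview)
Your proof is correct and follows essentially the same approach as the paper: partition pairs $(\gamma_1,\gamma_2)$ with $f(\gamma_1)=f(\gamma_2)$ according to their set of common levels, identify the pieces between consecutive common levels with elements of $A_{m_r}$, factor the probability using independence of disjoint edge sets, and then rearrange the resulting sum. If anything, your write-up is a little more explicit than the paper's in justifying the factorisation of $\Pp(\gamma_1,\gamma_2\ \mbox{open})$ via disjointness of edge sets across excursions, and in invoking Tonelli for the reordering.
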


\begin{proof}
Recall that
\begin{equation} \label{equation:an}
a_n := \frac{1}{\mu^{2n}} \sum_{(\gamma_1, \gamma_2) \in \C_n^2, f(\gamma_1)=f(\gamma_2)} \Pp(\gamma_1, \gamma_2 \ \ \mbox{open}).
\end{equation}
We will show that the set $\{(\gamma_1, \gamma_2) \in \C_n^2: f(\gamma_1) = f(\gamma_2)\}$ can be partitioned by the number of vertices in the intersection of $\gamma_1$ and $\gamma_2$.
Let
\[
I(\gamma_1, \gamma_2) = \{i:i(\gamma_1) = i(\gamma_2) \}  \]
and setting $M = m_1+\cdots+ m_j$, let
\[C(m_1, \dots,m_j) = \big\{(\gamma_1, \gamma_2) \in \C_M^2: I(\gamma_1, \gamma_2)=\{m_1,m_1+m_2, \dots, m_1+\cdots+ m_j\} \big \},
\]
then
\begin{equation}
\{(\gamma_1, \gamma_2) \in \C_M^2: f(\gamma_1) = f(\gamma_2)\} =\bigsqcup_{j=1}^n\bigsqcup_{\substack{(m_1, \ldots, m_j) \in \N^j \\  m_1+ \cdots + m_j = M}}  C(m_1, \ldots,m_j).\label{cup}
\end{equation}

Given two paths $\gamma_1= (0,v_1, \ldots, v_n)$ and $\gamma_2=(0,w_1, \ldots, w_m)$  we define the \textit{concatenation} of $\gamma_1$ and $\gamma_2$ by $\gamma_1 \circ \gamma_2 :=(0,v_1, \ldots, v_n, v_n+w_1, \ldots, v_n+w_m)$.
Observe that, given a sequence of positive integers $(m_1, \ldots, m_j) \in \N^j$, and recalling \eqref{Def:An}, we have
\begin{align*}
C(m_1, \ldots, m_j) = \{ (\gamma_1, \gamma_2) \in \C_{M}^2 : \gamma_1 = \gamma_{1,1} \circ \cdots &\circ \gamma_{1,j}, \gamma_2 = \gamma_{2,1} \circ \cdots \circ \gamma_{2,j}, \\
& \ \ (\gamma_{1,k}, \gamma_{2,k}) \in A_{m_k}, \forall k=1, \ldots, j \}.
\end{align*}
Using (\ref{cup}) we can rewrite (\ref{equation:an}) as
\begin{equation} \label{equation:lemma-an-bn}
\sum_{n=1}^{\infty} a_n = \sum_{j=1}^{\infty} \sum_{ (m_1, \ldots, m_j) \in \N^j} \ \ \sum_{(\gamma_1, \gamma_2) \in C(m_1, \cdots, m_j)} \frac{\Pp(\gamma_1, \gamma_2 \ \ \mbox{open})}{\mu^{2M}} .
\end{equation}
From the definitions of $C(m_1, \ldots, m_j)$ and $b_m$ mentioned earlier, it follows that
\begin{align*}
\sum_{(\gamma_1, \gamma_2) \in C(m_1, \cdots, m_j)} \frac{\Pp(\gamma_1, \gamma_2 \ \ \mbox{open})}{\mu^{2M}} &= \sum_{  \substack{(\gamma_{1,1} \circ \cdots \circ \gamma_{1,j}, \gamma_{2,1} \circ \cdots \circ \gamma_{2,j}) \\ (\gamma_{1,k},\gamma_{2,k}) \in A_{m_k}, k=1, \ldots, j} } \prod_{k=1}^{j} \frac{\Pp(\gamma_{1,k}, \gamma_{2,k} \ \ \mbox{open})}{\mu^{2m_k}} \\
&=  \prod_{k=1}^{j} \left[ \sum_{(\gamma_1, \gamma_2) \in A_{m_k}} \frac{\Pp(\gamma_{1}, \gamma_{2} \ \ \mbox{open})}{\mu^{2m_k}}\right] \\
&= \prod_{k=1}^j b_{m_k}.
\end{align*}

Substituting the expression above in  (\ref{equation:lemma-an-bn}) we obtain 
\begin{align*}
\sum_{n=1}^{\infty} a_n &= \sum_{j=1}^{\infty} \sum_{(m_1, \cdots, m_j) \in \N^j}  \left[ \prod_{k=1}^j b_{m_k}\right] = \sum_{j=1}^{\infty}\left( \sum_{m=1}^{\infty} b_m \right)^j,
\end{align*}
and the result follows.
\end{proof}

\subsection{Converting the problem to a random walk}

Using the lemmas in the previous section, we have, so far,  
\[\displaystyle \sup_m \E[W_m^2]<\infty \;\text{ iff }\;  \sum_{m=1}^{\infty} b_m <1;\]
we will now use random walks to compute $b_n$. In the remainder of the text, we will consider several independent random walks and we use $\Q$ to denote the probability on a space where they all live in harmony.

We say that $q = (q_1, \ldots, q_d)$ is a ($d$-dimensional) \textit{positive vector} if $0 \leq q_1, \ldots, q_d $ and $q_1 + \cdots + q_d >0$, and we say that $q$ is a ($d$-dimensional) \textit{probability vector} if it is a positive vector with $q_1 + \cdots + q_d = 1$. Given a positive vector $q$ we say that $\{S_n\}_n$ is the oriented random walk \textit{associated to $q$}  if
\begin{equation} \label{Def:Sn}
S_n = \xi_1+ \cdots + \xi_n,
\end{equation}
where $\xi_1, \ldots, \xi_n$ are i.i.d. random variables with
\[ \Q(\xi_1 = e_i)=  \frac{q_i}{q_1+\cdots+q_d}, \ \ \mbox{ for all $i=1, \ldots, d$}.\]

Given two independent random walks associated to $q$, $\{S^1_n\}_n$ and $\{S^2_n\}_n$ we define
\begin{equation} \label{Def:tau}
\tau = \tau(\{S^1_n\}_n, \{S^2_n\}_n): = \inf\{n \geq 1: S^1_n=S^2_n\}.
\end{equation}

\begin{lemma}
Let $b_m$ be as defined in (\ref{Def:bn}), $A_m$ be as in (\ref{Def:An}) and $\{S^1_n\}_n$, $\{S^2_n\}_n$ be two independent random walks associated to  $p=(p_1, \ldots, p_d)$. Then

\[\sum_{m=1}^{\infty} b_m < 1 \;\text{ iff }\; \sum_{m=2}^{\infty} \Q(\tau = m) < 1 - \frac{1}{\mu}.\]
\end{lemma}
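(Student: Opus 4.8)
The plan is to express $b_m$ directly in terms of the meeting probability of two independent random walks associated to $p$, and then sum over $m$. The key observation is that the summand defining $b_m$ in \eqref{Def:bn} is a sum over pairs of paths of length $m$ that share no intermediate vertex and agree at the endpoint; since the probability that a single path $\gamma$ of length $m$ is open factors as a product of the edge probabilities along $\gamma$, we can rewrite $\Pp(\gamma_1,\gamma_2\text{ open})/\mu^{2m}$ as a product of normalized edge weights $p_{i}/\mu$, which is exactly the transition probability of the walk $\{S_n\}$. Concretely, for a pair $(\gamma_1,\gamma_2)\in A_m$ whose increments are $(e_{i_1},\dots,e_{i_m})$ and $(e_{j_1},\dots,e_{j_m})$, the disjointness-except-at-endpoints condition translates to $\{S^1_k\}$ and $\{S^2_k\}$ not meeting before time $m$ but meeting at time $m$, and the weight of that pair of increment sequences is precisely $\Q(S^1_1=\gamma_1(1),\dots)\,\Q(S^2_1=\gamma_2(1),\dots)$.

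First I would set up the correspondence between oriented paths of length $m$ starting at the origin and trajectories of the associated random walk: a path $\gamma=(0,v_1,\dots,v_m)$ with $v_k-v_{k-1}=e_{i_k}$ corresponds to the event that the walk takes steps $e_{i_1},\dots,e_{i_m}$, which has $\Q$-probability $\prod_{k=1}^m (p_{i_k}/\mu) = \Pp(\gamma\text{ open})/\mu^m$. Applying this to both coordinates independently, for $(\gamma_1,\gamma_2)\in\C_m^2$ we get $\Pp(\gamma_1,\gamma_2\text{ open})/\mu^{2m} = \Q(\{S^1\}\text{ follows }\gamma_1,\ \{S^2\}\text{ follows }\gamma_2)$, using that the two independent walks are built on the product space governed by $\Q$. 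Summing over $(\gamma_1,\gamma_2)\in A_m$ — i.e. over trajectories with $S^1_k\neq S^2_k$ for $1\le k\le m-1$ and $S^1_m=S^2_m$ — yields $b_m = \Q(\tau=m)$ for $m\ge 2$ (and $b_1=\Q(\tau=1)=\Q(\xi^1_1=\xi^2_1)$, which needs a moment's care since $A_1$ imposes no disjointness constraint, only $f(\gamma_1)=f(\gamma_2)$).

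Once $b_m=\Q(\tau=m)$ is established for $m\ge 2$ and $b_1=\Q(\tau=1)$, the inequality $\sum_{m=1}^\infty b_m<1$ becomes $\sum_{m=1}^\infty \Q(\tau=m)<1$. Here I would isolate the $m=1$ term: $\Q(\tau=1)=\sum_{i=1}^d (p_i/\mu)^2$. A short computation — or a slicker argument noting that $\sum_i p_i^2 = \mu^2 - \mu + \sum_{i<j}\cdots$ is not quite right, so better to argue directly — shows that $1-\Q(\tau=1)$ should equal $1/\mu$ times something, and indeed the claimed equivalence $\sum_{m=2}^\infty\Q(\tau=m)<1-1/\mu$ suggests the identity $\Q(\tau=1)=1-1/\mu$ does \emph{not} hold in general; rather one rearranges $\sum_{m\ge1}b_m<1$ as $\sum_{m\ge2}b_m < 1-b_1 = 1-\Q(\tau=1)$, and the content of the lemma is the arithmetic fact that $\Q(\tau=1)=\sum_i(p_i/\mu)^2$ can be replaced, as an upper threshold, by $1/\mu$ — which requires $\sum_i (p_i/\mu)^2 \le 1/\mu$, i.e. $\sum_i p_i^2 \le \mu$, false for general $p_i$. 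So in fact the correct reading must be that $b_1$ as defined from $A_1$ (no disjointness condition, so it counts \emph{all} pairs meeting at level $1$) satisfies $b_1 = 1/\mu$ after the normalization, because $A_1 = \C_1^2$ restricted to equal endpoints is in bijection with... — here is the real subtlety, and I would pin down exactly what $A_1$ is: a pair of length-one paths $(0,e_i)$, $(0,e_j)$ lies in $A_1$ iff $e_i=e_j$, so $b_1 = \sum_i p_i^2/\mu^2$; to reconcile with the statement I expect the intended computation actually gives $\sum_{m\ge1}b_m = 1 - 1/\mu + \sum_{m\ge2}\Q(\tau=m)$ via a telescoping/renewal identity relating $\sum b_m$ to return probabilities, not a naive term-by-term match.

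The main obstacle, therefore, is not the path-to-walk dictionary (routine) but correctly accounting for the $m=1$ term and the normalization by $\mu$: I expect the clean statement to come from the renewal identity that the generating-function-free sum $\sum_{m\ge1}\Q(\tau=m)$ (probability the two walks ever meet again) relates to $b_1$ through the factor $1/\mu$ coming from the fact that a genuine open path must \emph{continue} past each shared vertex, contributing a survival factor whose expectation is $\mu/\mu = 1$ at intermediate steps but whose first-step bookkeeping shifts by $1/\mu$; I would make this precise by carefully writing $b_m$ as $\Q(S^1,S^2 \text{ agree at } m, \text{ disagree before})$ and checking the $m=1$ boundary case by hand, then subtracting $b_1$ from both sides of $\sum b_m < 1$ and identifying $1-b_1$ with $1-1/\mu$ via the explicit value of $\mu=p_1+\cdots+p_d$ together with the defining relation $\Q(\xi_1=e_i)=p_i/\mu$.
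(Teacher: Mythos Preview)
Your path-to-walk dictionary is exactly right for $m\ge 2$, and with it you correctly obtain $b_m=\Q(\tau=m)$ there. The gap is entirely in the $m=1$ term, and it is not a subtle renewal phenomenon: you have simply miscomputed $\Pp(\gamma_1,\gamma_2\text{ open})$ when $\gamma_1=\gamma_2$.

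For $(\gamma_1,\gamma_2)\in A_1$ you have $\gamma_1=\gamma_2=(0,e_i)$, so the two paths use the \emph{same} edge $\langle 0,e_i\rangle$. Hence $\Pp(\gamma_1,\gamma_2\text{ open})=\Pp(\langle 0,e_i\rangle\text{ open})=p_i$, not $p_i^2$. This gives
\[
b_1=\sum_{i=1}^d \frac{p_i}{\mu^2}=\frac{\mu}{\mu^2}=\frac{1}{\mu},
\]
which is precisely the quantity you were trying to reach by guesswork. The factorization $\Pp(\gamma_1,\gamma_2\text{ open})=\Pp(\gamma_1\text{ open})\Pp(\gamma_2\text{ open})$ that underlies your dictionary only holds when the two paths share no edges; for $(\gamma_1,\gamma_2)\in A_m$ with $m\ge 2$ the vertex-disjointness condition indeed forces edge-disjointness (different vertices at steps $1,\dots,m-1$ imply different edges at every step), but for $m=1$ it fails.

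With $b_1=1/\mu$ and $b_m=\Q(\tau=m)$ for $m\ge 2$, the equivalence
\[
\sum_{m\ge 1} b_m<1 \iff \sum_{m\ge 2}\Q(\tau=m)<1-\frac{1}{\mu}
\]
is immediate arithmetic, exactly as in the paper. No telescoping, no renewal identity, no ``survival factor'' is needed; the whole second half of your proposal is chasing a phantom created by the single mistake $p_i\mapsto p_i^2$ at $m=1$.
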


\begin{proof}
Observe that
\begin{align*}
b_1 &= \sum_{(\gamma_1, \gamma_2) \in A_1} \frac{\Pp(\gamma_{1}, \gamma_{2} \ \ \mbox{open})}{\mu^{2}} 
= \sum_{i=1}^d  \frac{\Pp(\langle 0,e_i \rangle, \langle 0,e_i \rangle \ \ \mbox{open})}{\mu^{2}} 
= \sum_{i=1}^d \frac{p_i}{\mu^2} 
= \frac{1}{\mu}.
\end{align*}
Also, for $m\geq 2$,  
we have
\begin{align*}
b_m &= \sum_{(\gamma_1, \gamma_2) \in A_m} \frac{\Pp(\gamma_{1}, \gamma_{2} \ \ \mbox{open})}{\mu^{2m}}  \\
&= \sum_{(\gamma_1, \gamma_2) \in A_m}   \Q\big((0,S^1_1, \cdots, S^1_m \big)= \gamma_1) \cdot \Q \big((0,S^2_1, \cdots, S^2_m) = \gamma_2\big) \\
&= \Q(\tau =m),
\end{align*}
thus
\[\sum_{m=1}^{\infty} b_m = \frac{1}{\mu} + \sum_{m=2}^{\infty} \Q(\tau = m),\]
and the result follows.
\end{proof}

\begin{lemma}
Let $\{S^1_n\}_n$ and $\{S^2_n\}_n$ be two independent random walks associated to a positive vector $q$ and let $\tau$ be as defined in (\ref{Def:tau}). Then
\[ \sum_{m=1}^{\infty} \Q(\tau=m) \leq 1-\frac{1}{\mu} \;\mbox{ iff }\;  \sum_{k=1}^{\infty} \Q(S^1_{k}=S^2_{k}) \leq  \mu -1. \]
\end{lemma}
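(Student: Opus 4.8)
The plan is to reduce the claimed equivalence to the classical renewal identity linking the return probability of a random walk and its Green's function at the origin; once that identity is in place, the lemma is one line of algebra. Write $\mu := q_1+\cdots+q_d$, and set
\[ p^\star := \sum_{m\ge1}\Q(\tau=m)=\Q(\tau<\infty), \qquad G := \sum_{k\ge1}\Q(S^1_k=S^2_k), \]
the probability that the two walks ever meet at a positive time, and the expected number of such meetings, respectively (note $p^\star>0$, since $\Q(\tau=1)=\sum_{i=1}^d(q_i/\mu)^2>0$). I claim that $G=p^\star/(1-p^\star)$ when $p^\star<1$, and $G=\infty$ when $p^\star=1$.

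To prove this, consider the difference walk $D_n:=S^1_n-S^2_n$, a random walk on $\Z^d$ started at $0$ with i.i.d. increments $\xi^1_k-\xi^2_k$; then $\tau=\inf\{n\ge1:D_n=0\}$ is its first return time to the origin and $\{S^1_k=S^2_k\}=\{D_k=0\}$. For fixed $k\ge1$, decompose $\{D_k=0\}$ according to $\tau\le k$: on $\{\tau=m\}$ one has $D_m=0$, the event $\{\tau=m\}$ depends only on $D_1,\dots,D_m$, and the increments of $D$ after time $m$ are independent of $D_1,\dots,D_m$, so $\Q(\tau=m,D_k=0)=\Q(\tau=m,D_k-D_m=0)=\Q(\tau=m)\,\Q(D_{k-m}=0)$ with the convention $\Q(D_0=0)=1$. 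Summing over $k\ge1$ and interchanging the (nonnegative) sums by Tonelli,
\[ G=\sum_{m\ge1}\Q(\tau=m)\sum_{k\ge m}\Q(D_{k-m}=0)=\sum_{m\ge1}\Q(\tau=m)\,(1+G)=p^\star(1+G), \]
hence $G(1-p^\star)=p^\star$, which is the claimed identity (and in the form $G=1+G$ when $p^\star=1$, forcing $G=\infty$).

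It remains to read off the equivalence. If $p^\star=1$, then $G=\infty$ so $G\le\mu-1$ fails, while $p^\star=1\le1-1/\mu$ also fails because $1-1/\mu<1$ for every $\mu\in(0,\infty)$; both sides are false. If $p^\star<1$, then using $1-p^\star>0$ and $\mu>0$,
\[ G\le\mu-1 \iff \frac{p^\star}{1-p^\star}\le\mu-1 \iff p^\star\le(\mu-1)(1-p^\star) \iff \mu p^\star\le\mu-1 \iff p^\star\le1-\frac{1}{\mu}, \]
every step being reversible, which is exactly the assertion. The only substantive step is the renewal identity of the second paragraph; the points to handle with care are the boundary term $k=m$ in the first-return decomposition (where the shifted walk contributes the trivial increment $D_0=0$) and the recurrent case $p^\star=1$, where one must check that ``$G=\infty$'' is matched consistently against both inequalities.
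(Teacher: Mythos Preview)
Your proof is correct and follows essentially the same route as the paper: both establish the renewal identity $G=p^\star(1+G)$ via the first-return decomposition $\Q(S^1_k=S^2_k)=\sum_{m\le k}\Q(\tau=m)\,\Q(S^1_{k-m}=S^2_{k-m})$ and then read off the equivalence by elementary algebra. Your version is in fact slightly more careful than the paper's, since you explicitly treat the boundary case $p^\star=1$ (equivalently $G=\infty$), which the paper passes over silently.
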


\begin{proof}
Observe that
\begin{equation}
\Q(S^1_m=S^2_m) = \sum_{n \leq m} \Q(\tau=n) \Q(S^1_{m-n}=S^2_{m-n}),
\end{equation}
so
\begin{align*}
\sum_{m=1}^{\infty} \Q(S^1_m=S^2_m) &= \sum_{m=1}^{\infty} \sum_{n \leq m} \Q(\tau=n) \Q(S^1_{m-n}=S^2_{m-n}) \\
&= \sum_{n=1}^{\infty} \left[  \Q(\tau=n) \sum_{m \geq 0} \Q(S^1_{m}=S^2_{m}) \right] \\
&= \left( \sum_{n=1}^{\infty}  \Q(\tau=n)  \right) \left( 1+ \sum_{m=1}^{\infty} \Q(S^1_{m}=S^2_{m}) \right),
\end{align*}
and therefore
\begin{equation}
\sum_{n=1}^{\infty}  \Q(\tau=n) = \dfrac{\sum_{m=1}^{\infty} \Q(S^1_m=S^2_m) }{1+ \sum_{m=1}^{\infty} \Q(S^1_m=S^2_m) } = 1 - \dfrac{1}{1+ \sum_{m=1}^{\infty} \Q(S^1_m=S^2_m)}.
\end{equation}
Finally, we have
\[ 1 - \dfrac{1}{1+ \sum_{m=1}^{\infty} \Q(S^1_m=S^2_m)} \leq 1 - \frac{1}{\mu} \;\text{ iff }\; \sum_{m=1}^{\infty} \Q(S^1_m=S^2_m) \leq \mu -1,\]
and this finishes the proof.
\end{proof}

\section{Analysis of the random walks}\label{rw}

In this section, we will estimate the maximal probability of two i.i.d random walks meeting in a fixed time as a function of their parameters.

Given a probability vector $q=(q_1, \ldots, q_d)$, let $\{S^1_n\}_n$ and $\{S^2_n\}$ be two independent random walks associated to $q$ and define

\begin{equation} \label{def:sigmaq}
\lambda(q) := \sum_{n=1}^{\infty} \Q(S^1_{n}=S^2_{n}).
\end{equation}

 Combining the results of the previous section, and observing that $\Q(\tau = 1) > 0$, we have
\begin{equation} \label{eq:main}
\text{If }\; \lambda(q) \leq \mu -1, \;\text{ then }\;\sup_{n} \E[W_n^2]<\infty.
\end{equation}

In this section, we investigate the behavior of $\lambda(q)$.

\begin{theorem} \label{teo:meeting}
Let $d \geq 4$ and $4 \leq m \leq d$ be integers. Consider the $d$-dimensional probability vector $q^* = q^*_d(m) = (1/m, 1/m, \dots, 1/m, 0, \dots, 0)$, and let $\lambda(q)$ be as  in (\ref{def:sigmaq}). Then 
\begin{enumerate}
\item[(a)] $\lambda(q^*) \leq \dfrac{10}{m}$

\item[(b)] for all $d$-dimensional probability vectors  $q$ such that $\max_i{q_i} \leq \dfrac{1}{m}$ we have 
\[ \lambda(q) \leq \lambda(q^*).\]
\end{enumerate}
\end{theorem}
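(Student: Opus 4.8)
The plan is to reduce everything to a single random walk. If $\{S^1_n\}$ and $\{S^2_n\}$ are independent walks associated to a probability vector $q$, then the difference $S^1_n - S^2_n$ has the same law as $T_n := \sum_{k=1}^n \eta_k$, where the $\eta_k$ are i.i.d.\ with $\eta_k = e_i - e_j$ with probability $q_i q_j$. Hence $\Q(S^1_n = S^2_n) = \Q(T_n = 0)$, and the cleanest route is Fourier inversion on the torus: $\Q(T_n = 0) = (2\pi)^{-d}\int_{[-\pi,\pi]^d} \widehat{\eta}(\theta)^n\, d\theta$, where $\widehat{\eta}(\theta) = |\widehat{\xi}(\theta)|^2 = \bigl|\sum_i q_i e^{i\theta_i}\bigr|^2 \geq 0$ is the characteristic function of one step. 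Summing the geometric series in $n$ (the terms are nonnegative and one checks $\widehat\eta(\theta)<1$ for $\theta\neq 0$, using $d\ge4$ so the walk is genuinely multidimensional, so the sum converges), one gets
\[
\lambda(q) = \frac{1}{(2\pi)^d}\int_{[-\pi,\pi]^d} \frac{\widehat\eta(\theta)}{1-\widehat\eta(\theta)}\, d\theta .
\]
This is the identity I would establish first.

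For part (b) I would argue that $\lambda$ is, in the appropriate sense, increasing in each $q_i$ subject to the constraint $\sum q_i = 1$, or more precisely that under the constraint $\max_i q_i \le 1/m$ the maximum of $\lambda$ is attained at the extreme point $q^*$. The key observation is that $x \mapsto x/(1-x)$ is convex and increasing on $[0,1)$, and $\widehat\eta(\theta)$ is itself a convex (indeed quadratic-type) function of the vector $q$ for each fixed $\theta$ — it is the square of the modulus of a linear form in $q$. So $q \mapsto \lambda(q)$ is convex on the polytope $\{q : q_i \ge 0, \sum q_i = 1, \max_i q_i \le 1/m\}$, and a convex function on a polytope attains its maximum at a vertex; the vertices of this polytope are exactly the permutations of $q^*_d(m)$, all of which give the same $\lambda$ by symmetry. (One must be slightly careful that the walk determined by a vertex is still irreducible enough for the integral to converge, but with $m \ge 4$ coordinates active and $d\ge 4$ this is fine.) I would spell this convexity argument out carefully since it is the conceptual heart of (b).

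For part (a) one is left with a concrete estimate: bound $\lambda(q^*)$ for the uniform vector on $m \ge 4$ coordinates by $10/m$. Here $\widehat\eta(\theta) = \bigl(\frac1m\sum_{i=1}^m \cos\theta_i\bigr)^2 + \bigl(\frac1m\sum_{i=1}^m \sin\theta_i\bigr)^2$, and by the local central limit theorem heuristic $\Q(T_n = 0) \asymp c_m n^{-m/2}$ for the $m$-dimensional walk, so $\lambda(q^*) = \sum_n \Q(T_n=0)$ is a convergent series that should be of order $1/m$ (the $n=1$ term already contributes $\Q(\xi^1_1=\xi^2_1)=1/m$, and the tail $n\ge2$ is a lower-order correction once $m\ge4$, since $\sum_{n\ge 2} n^{-m/2}$ is small). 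Concretely I would split the integral into a region near the origin, where a Gaussian bound $1-\widehat\eta(\theta) \ge c\,\|\theta\|^2$ gives an integral comparable to $\int_{\|\theta\|\le\delta} \|\theta\|^{-2}\,d\theta \cdot (\text{const})^m$ type decay, and the complementary region where $\widehat\eta(\theta)$ is bounded away from $1$; alternatively, and probably more robustly for getting an explicit constant like $10$, estimate the series $\sum_{n\ge1}\Q(T_n=0)$ directly by writing $\Q(T_n=0) = \sum_{k} \binom{n}{k_1,\dots,k_m}^2 m^{-2n}$ and applying Stirling — this is presumably the "equation:cotastirling" the remark refers to. The main obstacle is precisely this step: getting the explicit numerical constant $10$ (rather than just $O(1/m)$) requires a careful, non-asymptotic handling of the combinatorial sum or of the Fourier integral, controlling the lower-order-in-$m$ contributions uniformly, and this is where most of the technical work will go.
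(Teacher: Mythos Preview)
Your plan is sound. For part~(a) your second suggestion --- bound $\sum_n \Q(T_n=0) = \sum_n m^{-2n}\sum_{\ell}\binom{n}{\ell_1,\dots,\ell_m}^2$ by $m^{-n}$ times the maximal multinomial coefficient and apply Stirling --- is exactly what the paper does in Section~\ref{item1}, splitting at $n\le m$ versus $n>m$; the explicit constant $10$ falls out of the inequalities in~\eqref{equation:cotastirling}.

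For part~(b) your approach is genuinely different from the paper's, and cleaner. The paper argues by an iterative ``packing'' move: take two coordinates $q_1,q_2\notin\{0,1/m\}$, replace them by either $(q_1+q_2,0)$ or $(1/m,\,q_1+q_2-1/m)$, and show $\lambda$ does not decrease; after at most $d$ iterations one reaches $q^*$. To prove monotonicity of each move, the paper decomposes the meeting probability via a two-coordinate projection (Lemma~\ref{lemma:passeio-projecao}) and reduces the question to showing that the return probability of a one-dimensional lazy walk is monotone in its laziness parameter (Lemma~\ref{lemma:f-increasing}). Your convexity argument bypasses this machinery: for each fixed $\theta$ the map $q\mapsto\bigl|\sum_i q_i e^{i\theta_i}\bigr|^2$ is a positive semidefinite quadratic form, $x\mapsto x/(1-x)$ is increasing and convex on $[0,1)$, hence $\lambda$ is convex on the polytope $\{q:\sum q_i=1,\ 0\le q_i\le 1/m\}$, whose extreme points are precisely the permutations of $q^*$, all giving the same $\lambda$ by symmetry. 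The paper's route stays purely probabilistic and avoids characteristic functions, matching the tools used elsewhere in the paper; yours is shorter and exposes the structural reason (convexity plus extreme points) for the extremality of $q^*$. One small correction: $\widehat\eta(\theta)=1$ not only at $\theta=0$ but whenever the $\theta_i$ with $q_i>0$ all coincide; this set still has Lebesgue measure zero in $[-\pi,\pi]^d$, so your geometric-series identity for $\lambda(q)$ is unaffected.
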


We will prove Theorem \ref{teo:meeting} in the next sections, but before that we use it to prove Lemma \ref{lemma:main}.

\begin{proof}[Lemma \ref{lemma:main}]
Let $q_i = \frac{p_i}{\sum p_i}$. Under the hypothesis of the Theorem \ref{main}, we have $q_i \leq \left \lceil \varepsilon/10 \right \rceil^{-1}$ for  $1\leq i\leq d$; taking $m = \lceil 10/\varepsilon \rceil$ we obtain  $ q_i  \leq 1/m$. Taking $q=(q_1, \dots, q_d)$, it follows from Theorem \ref{teo:meeting} that $\lambda(q) \leq 10/m \leq \varepsilon$. Finally, by  (\ref{eq:main}) we have
$ \sup \E[W_n^2]<\infty$ and the lemma follows.
\end{proof}

\subsection{Bounding the probability of meeting}  \label{item1}

In this subsection we prove Item (a) in Theorem \ref{teo:meeting}. Observe that for all $d \geq m$ we have 
\[ \lambda(\underbrace{1/m, 1/m, \ldots, 1/m}_{m},\underbrace{ 0, \ldots, 0}_{d-m}) = \lambda(\underbrace{1/m, 1/m, \ldots, 1/m}_{m}).\]

For $m\leq 3$, we have that $\lambda(q^*)=\infty$, so we let $m \geq 4$. Then
\begin{align*}
\lambda(1/m, 1/m, \ldots, 1/m) & = \sum_{n=1}^{\infty} \sum_{\substack{(l_1, \ldots, l_m) \in \N^m \\ l_1+\cdots+l_m=n}} \binom{n}{l_1, \ldots, l_m}^2 \frac{1}{m^{2n}}   \\
&\leq \sum_{n=1}^{\infty} \left[ \max_{\substack{(l_1, \ldots, l_m) \in \N^m \\  l_1+\cdots+l_m=n} } \left\{  \binom{n}{l_1, \ldots, l_m}  \right\} \frac{1}{m^n} \right].
\end{align*}

We will split the first sum in two parts, the first for $n \leq m$ and the second for $n>m$, and bound each one separately.

Observe that for $n=1, \ldots, m$ the maximum inside the brackets is bounded by $n!$, so the sum for $n \leq m$ is bounded by
\begin{equation}
\sum_{n=1}^{m} \frac{n!}{m^n} \leq \frac{1}{m} + \frac{2}{m^2} + \sum_{n=3}^{m} \frac{3!}{m^3} \leq \frac{1}{m} + \frac{8}{m^2}.
\end{equation}

Now, for each $j \geq 1$ and $0 \leq \ell \leq m-1$, we use Stirling's bounds, 

\[ \sqrt{2 \pi n} \left( \dfrac{n}{e} \right)^n \leq n! \leq \sqrt{2 \pi n} \left( \dfrac{n}{e} \right)^n e^{\frac{1}{12n}},\] 

to obtain, for $n = jm+\ell$,

	\begin{align*}
\max_{\substack{(l_1, \ldots, l_m) \in \N^n \\  l_1+\cdots+l_m=n} } \left\{  \binom{n}{l_1, \ldots, l_m}  \right\}  &=  \frac{(jm+\ell)!}{[(j+1)!]^{\ell}(j!)^{m-\ell}} \\
&\leq \frac{ \sqrt{2 \pi (jm+ \ell)} \left( \dfrac{jm+ \ell}{e} \right)^{jm+ \ell} e^{\frac{1}{12(jm+\ell)}}}{ \left[ \sqrt{2 \pi (j+1)} \left( \dfrac{j+1}{e} \right)^{j+1} \right]^{\ell} \times \left[ \sqrt{2 \pi j} \left( \dfrac{j}{e} \right)^j \right]^{m-\ell}}\\
  &\leq \frac{e^{\frac{1}{12n}} \sqrt{m} \cdot  m^{jm+ \ell}}{\left[ \sqrt{2 \pi j}\right]^{m-1}}     \frac{  \left( j+ \frac{\ell}{m} \right)^{jm+ \ell}   }  {  \left( j+1 \right)^{j\ell+\ell} \times   j^{jm-j\ell}  }.
\end{align*}
Now, observe that
\begin{align*}
     \frac{  \left( j+ \frac{\ell}{m} \right)^{jm+ \ell}   }  {  \left( j+1 \right)^{j\ell+\ell} \times   j^{jm-j\ell}  }
     &=  \left( 1- \frac{m-\ell}{m(j+1)} \right)^{(j+1) \ell} \left( 1 + \frac{\ell}{mj} \right)^{j(m-\ell)}\\
     &\leq \exp \left(-\frac{(m-\ell)}{m} \ell \right) \times \exp\left( \frac{\ell}{m} (m - \ell) \right) \\
     &= 1,
\end{align*}
thus
\begin{equation}
\max_{\substack{(l_1, \ldots, l_m) \in \N^n \\  l_1+\cdots+l_m=n} } \left\{  \binom{n}{l_1, \ldots, l_m}  \right\} \leq  \dfrac{e^{\frac{1}{12 m}} \cdot \sqrt{m} \cdot m^{mj+\ell}}{(\sqrt{2 \pi})^{m-1} (\sqrt{j})^{m-1}}.
\end{equation}

Using the bound above we obtain
\begin{align*}
\lambda(q^*) & \leq \frac{1}{m} + \frac{8}{m^2} + \dfrac{e^{\frac{1}{12m}} \cdot m \cdot \sqrt{m}}{(\sqrt{2 \pi})^{m-1} }  \sum_{j=1}^{\infty} \frac{1}{j^{\frac{m-1}{2}}}   \\
&\leq \frac{1}{m} + \frac{8}{m^2} + \dfrac{e^{\frac{1}{12m}} \cdot m \cdot \sqrt{m}}{(\sqrt{2 \pi})^{m-1} }   \left( 1 + \frac{2}{m-3} \right).
\end{align*}

For $m \geq 4$ we then have
\begin{align} \label{equation:cotastirling}
\frac{8}{m^2} \leq \frac{2}{m} \ \  \ \ \mbox{and} \ \   \ \  \dfrac{e^{\frac{1}{12m}} \cdot m \cdot \sqrt{m}}{(\sqrt{2 \pi})^{m-1} }   \left( 1 + \frac{2}{m-3} \right) \leq \frac{7}{m},
\end{align}
and the result follows.

\begin{remark}
For any $\delta > 0$, both upper bounds of (\ref{equation:cotastirling}) could be taken as $\delta/m$ as long as $m$ is chosen to be sufficiently large. In this case, we could conclude that $\lambda(q^*)\leq (1+2\delta)/m$ in (a) of Theorem \ref{teo:meeting}.
\end{remark}

\subsection{Projecting the random walks}   \label{item2}

We now want to understand the behavior of a random walk in $\Z^d$. To do that we will split the random walk in $\Z^d$ in two "normalized" projections in $\Z^2$ and $\Z^{d-2}$ and consider the behavior of the two parts to determine the behavior of the main random walk.

Given two independent oriented $d$-dimensional random walks $\{S^1_n(q)\}_n$, $\{S^2_n(q)\}_n$ associated with the probability vector $q=(q_1, \ldots, q_d)$, for $i=1,2$, let $\{R^i_n(q)\}_n$  be two independent 2-dimensional oriented random walks associated with $(q_1,q_2)$ and  let $\{U^i_n(q)\}_n$ be two independent $(d-2)$-dimensional oriented random walks associated with $(q_3,\dots, q_d)$. Writing, for $i=1,2$, $S^i_n(q) = (S^i_n(q)_1, \ldots, S^i_n(q)_d)$, we define two complementary bi-dimensional oriented new random walks, $\{\tilde{S}^1_n(q)\}_n$ and $\{\tilde{S}^2_n(q)\}_n$ coupled with $\{S^1_n(q)\}_n$ and $\{S^2_n(q)\}_n$ respectively, where 
\[ \tilde{S}^i_n(q) = \left(S^i_n(q)_1 + S^i_n(q)_2, S^i_n(q)_3 + \cdots + S^i_n(q)_d \right).
\]
Clearly $\{\tilde{S}^1_n(q)\}_n$ and $\{\tilde{S}^2_n(q)\}_n$ are independents and have the same distribution of a random walk associated with the probability vector $\tilde{q} := (q_1 + q_2, q_3 + \cdots + q_d)$. We will omit the dependency on $q$ until the proof of Theorem \ref{teo:meeting}.






One can think of the those newly defined random walks defined above as psudo-projections of the original random walks and the next lemma will express the meeting probability of the first in term of the latter.

\begin{lemma} \label{lemma:passeio-projecao}
Let $\{S_n^1\}_n$ and $\{S_n^2\}_n$ be two random walks associated with the probability vector $q=(q_1, \ldots, q_d)$. Then
\begin{equation} \label{equacao:encontro-passeio}
 \Q(S^1_n=S^2_n) =   \sum_{\substack {(j,k) \in \N^2 \\ j+k=n}} \Q(\tilde{S}^1_n = \tilde{S}^2_n = (j,k))   \Q(R^1_j = R^2_j)  \Q(U^1_k = U^2_k).
\end{equation}

\end{lemma}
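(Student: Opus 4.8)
The plan is to decompose the event $\{S^1_n = S^2_n\}$ according to the value of the "pseudo-projection" coordinate $\tilde S^i_n$, and then exploit a conditional independence of the two genuine projections given the pseudo-projection.

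First I would observe that the equality of two $d$-dimensional vectors can be tested coordinate-block by coordinate-block: $S^1_n = S^2_n$ if and only if $\bigl(S^1_n(q)_1, S^1_n(q)_2\bigr) = \bigl(S^2_n(q)_1, S^2_n(q)_2\bigr)$ \emph{and} $\bigl(S^1_n(q)_3, \dots, S^1_n(q)_d\bigr) = \bigl(S^2_n(q)_3, \dots, S^2_n(q)_d\bigr)$. Now condition on the pair $(j,k)$ where $j$ (resp.\ $k$) is the number of steps among the first $n$ that land in $\{e_1, e_2\}$ (resp.\ in $\{e_3, \dots, e_d\}$) for walk $1$, and similarly require this same split for walk $2$; note $j + k = n$. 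The event $\{\tilde S^1_n = \tilde S^2_n = (j,k)\}$ is exactly the event that both walks have made $j$ "$\{1,2\}$-type" steps and $k$ "$\{3,\dots,d\}$-type" steps in their first $n$ moves. Conditionally on this event, the positions of the first two coordinates of $S^i_n$ depend only on the \emph{relative order and labels} of that walk's $j$ many $\{1,2\}$-type steps, which—after conditioning on their count being $j$—are i.i.d.\ with $\Q(e_1) : \Q(e_2) = q_1 : q_2$, i.e.\ distributed as $R^i_j$; likewise the last $d-2$ coordinates are distributed as $U^i_k$. The key structural fact is that, conditioned on $\tilde S^i_n$, the "within-block" randomness of walk $i$ for the two blocks is independent, and across $i=1,2$ everything is independent by hypothesis. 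Hence
\[
\Q\bigl(S^1_n = S^2_n \,\big|\, \tilde S^1_n = \tilde S^2_n = (j,k)\bigr) = \Q(R^1_j = R^2_j)\,\Q(U^1_k = U^2_k),
\]
and summing over the partition $\bigsqcup_{j+k=n} \{\tilde S^1_n = \tilde S^2_n = (j,k)\}$ (these events are disjoint, and their union is all of the sample space for the pair of pseudo-projections) together with the total probability formula gives exactly \eqref{equacao:encontro-passeio}.

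To make the conditional-independence step rigorous I would set it up combinatorially rather than via abstract conditioning: write $S^i_n = \sum_{t=1}^n \xi^i_t$ with $\xi^i_t$ i.i.d.\ as in \eqref{Def:Sn}, let $B^i = \{t \le n : \xi^i_t \in \{e_1, e_2\}\}$, and condition on $B^1, B^2$ with $|B^1| = |B^2| = j$. Given the \emph{sets} $B^1, B^2$, the sub-vectors $(\xi^i_t)_{t \in B^i}$ are, conditionally, $j$ i.i.d.\ draws from $\{e_1, e_2\}$ with the normalized probabilities $q_1/(q_1+q_2), q_2/(q_1+q_2)$, and are independent of $(\xi^i_t)_{t \notin B^i}$, which are $k$ i.i.d.\ draws from $\{e_3,\dots,e_d\}$ with normalized probabilities; and the two values of $i$ remain independent. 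Then $\{S^1_n = S^2_n\}$ factors as $\{\sum_{t\in B^1}\xi^1_t \text{ restricted to coords } 1,2 = \sum_{t\in B^2}\xi^2_t\} \cap \{\text{analog for coords }3..d\}$, which has conditional probability $\Q(R^1_j = R^2_j)\,\Q(U^1_k = U^2_k)$ independently of the actual sets $B^1, B^2$ (only their common size $j$ matters). Averaging over $B^1, B^2$ replaces the conditioning event by $\{\tilde S^1_n = \tilde S^2_n = (j,k)\}$ and yields the claimed identity.

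The main obstacle is purely bookkeeping: articulating cleanly that, conditioned on which time-slots are "$\{1,2\}$-type," the two blocks of each walk are independent with the correct normalized step distributions, and that this conditional law does not depend on the identity of the slots—only on their count. Once that is phrased correctly (or simply proved by a direct sum over paths, grouping oriented paths $\gamma$ by their two "types" pattern and factoring the product $\prod q_{i(\gamma)}$), the rest is the elementary partition-of-unity over $(j,k)$ with $j+k=n$ and the observation that block-wise equality is equivalent to full equality. I would present the direct path-counting version, since it sidesteps any measure-theoretic subtlety about conditioning and makes the factorization of $\Pp(\gamma_1,\gamma_2\text{ open})$-type products transparent.
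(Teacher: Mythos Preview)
Your proposal is correct and is essentially the paper's argument: the paper also partitions $\{S^1_n=S^2_n\}$ according to the common value $(j,k)$ of $\tilde S^1_n=\tilde S^2_n$ and then verifies, by a direct multinomial expansion, that $\Q(S^1_n=S^2_n,\ \tilde S^1_n=\tilde S^2_n=(j,k))$ factors as the product of the three displayed probabilities---exactly the ``direct path-counting version'' you say you would present. One phrasing to tighten: the union $\bigsqcup_{j+k=n}\{\tilde S^1_n=\tilde S^2_n=(j,k)\}$ is not the whole sample space but only $\{\tilde S^1_n=\tilde S^2_n\}$, which is harmless since $\{S^1_n=S^2_n\}$ is contained in it.
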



\begin{proof}
In fact
\begin{equation}
\Q(S^1_n=S^2_n) = \sum_{\substack{(j,k) \in \N^2 \\ j+k=n}} \Q(S^1_n=S ^2_n, \tilde{S}^1_n=\tilde{S}^2_n =(j,k)).
\end{equation}
Now, observe that fixed $(j,k) \in \N^2$ such that $j+k = n$, we have
\begin{align*}
\Q&(S^1_n=S^2_n, \tilde{S}^1_n=\tilde{S}^2_n =(j,k)) \\
= &\left[  \binom{j+k}{j}^2 (q_1+q_2)^{2j} \cdot (q_3+\cdots + q_d)^{2k}  \right] \cdot \left[  \sum_{l=0}^j \binom{j}{l}^2 \left(  \frac{q_1}{q_1+q_2}  \right)^{2l} \left(  \frac{q_2}{q_1+q_2}  \right)^{2(j-l)}   \right] \\
&\cdot \left[  \sum_{ l_3 + \cdots + l_d = k } \binom{k}{l_3, \ldots, l_d}^2  \left( \frac{q_3}{q_3+ \cdots + q_d} \right)^{2l_3}  \cdots  \left( \frac{q_d}{q_3+ \cdots + q_d} \right)^{2l_d} \right] \\
=  & \, \Q(\tilde{S}^1_n = \tilde{S}^2_n = (j,k)) \cdot  \Q(R^1_j = R^2_j) \cdot \Q(U^1_k = U^2_k).
\end{align*}
\end{proof}

Next, we state and prove an elementary lemma which will be useful to bound the second term in  (\ref{equacao:encontro-passeio}).

\begin{lemma} \label{lemma:f-increasing}
For each $x \in [0,1]$ let $\{Z_n(x)\}_n$ be a random walk over $\Z$ 
\[ Z_n(x) = \zeta_1(x) + \cdots + \zeta_n(x), \]
where $\{\zeta_i\}_{i \in \N}$ are i.i.d. random variables with \[\Q(\zeta_i(x) =0) = x\]
and
 \[\Q(\zeta_i(x) =-1)=\Q(\zeta_i(x)=1)=\dfrac{1-x}{2} . \]
Then, for each $n \in \N$ fixed, the function $F_n:[1/2,1] \to [0,1]$ given by
\begin{equation} \label{def:fnx}
F_n(x) = \Q(Z_n(x)=0).
\end{equation} 
is increasing .
\end{lemma}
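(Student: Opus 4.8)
The plan is to show that $F_n(x) = \Q(Z_n(x) = 0)$ has nonnegative derivative on $[1/2, 1]$, or equivalently to write $F_n(x)$ explicitly and argue monotonicity from the formula. First I would write out $F_n(x)$ by conditioning on the number $k$ of steps equal to $0$: if $k$ of the $n$ steps are $0$, the remaining $n-k$ steps are $\pm 1$ and must sum to zero, which forces $n-k$ even, say $n-k = 2j$, with exactly $j$ of them equal to $+1$. This gives
\[
F_n(x) = \sum_{j \ge 0} \binom{n}{2j}\binom{2j}{j} x^{n-2j}\left(\frac{1-x}{2}\right)^{2j} = \sum_{j \ge 0} \binom{n}{2j}\binom{2j}{j} \frac{1}{4^j}\, x^{n-2j}(1-x)^{2j}.
\]

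Then I would differentiate term by term in $x$. The hard part is that the individual monomials $x^{n-2j}(1-x)^{2j}$ are not monotone on $[1/2,1]$, so one cannot conclude coefficient-wise; some cancellation among the terms is needed. My preferred route is a probabilistic coupling argument instead: interpret increasing $x$ as making the walk ``lazier.'' Concretely, for $x < y$ in $[1/2,1]$, couple $Z_n(x)$ and $Z_n(y)$ so that each step of the $y$-walk is obtained from the corresponding step of the $x$-walk by, with some probability, resetting a $\pm 1$ step to $0$; but this naive coupling does not obviously preserve the event $\{Z_n = 0\}$ either. A cleaner observation: condition on the set $T$ of time-indices where the step is nonzero. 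Given $|T| = n - k$, the walk returns to $0$ iff the $\pm 1$ increments on $T$ sum to zero, an event of probability $g(n-k) := \binom{n-k}{(n-k)/2} 2^{-(n-k)}$ (zero if $n-k$ is odd), which does \emph{not} depend on $x$. Hence
\[
F_n(x) = \E\big[\, g(B_n(x))\,\big], \qquad B_n(x) \sim \mathrm{Binomial}\!\left(n, 1-x\right),
\]
where $B_n(x)$ counts the nonzero steps. As $x$ increases on $[1/2,1]$, $1-x$ decreases, so $B_n(x)$ decreases in the stochastic (stochastic dominance) order. Therefore it suffices to prove that $g$ is \emph{decreasing} in the sense appropriate for this expectation — more precisely, that the function $b \mapsto g(b)$ on $\{0,1,\dots,n\}$ is such that $\E[g(B_n(x))]$ is monotone; the safe statement is that $g$ is nonincreasing along the two-step moves $b \mapsto b+2$ that matter (since $g$ vanishes at odd $b$), i.e. $g(b+2) \le g(b)$, which is the elementary inequality
\[
\frac{\binom{b+2}{(b+2)/2}}{2^{b+2}} \le \frac{\binom{b}{b/2}}{2^{b}} \iff \frac{b+1}{b+2} \le 1,
\]
true for all even $b \ge 0$.

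To make the stochastic-dominance step rigorous given that $g$ is only ``two-step monotone,'' I would couple $B_n(x)$ and $B_n(y)$ (for $x<y$) on a common probability space via $n$ independent uniforms so that $B_n(y) \le B_n(x)$ pointwise, split into the parity-matched and parity-mismatched cases, and use $g(b) = 0$ for odd $b$ together with $g(b') \le g(b)$ for even $b' \le b$ to get $g(B_n(y)) \ge \mathbf{1}\{\text{appropriate event}\}\, g(B_n(x))$ — actually, the clean way is to note $g(B_n(y)) - g(B_n(x)) \ge 0$ whenever $B_n(y)$ and $B_n(x)$ have the same parity (by two-step monotonicity applied $(B_n(x)-B_n(y))/2$ times) and $g(B_n(y)) - g(B_n(x)) = g(B_n(y)) \ge 0$ or $= -g(B_n(x))$ in the mismatched cases; summing these does not immediately give the result, so I would instead simply observe that $F_n(x) = \sum_{j} g(2j)\,\Q(B_n(x) = 2j)$ and that $\Q(B_n(x) = 2j) = \binom{n}{2j}(1-x)^{2j}x^{n-2j}$, then differentiate this finite sum directly: $\frac{d}{dx}F_n(x)$ telescopes after regrouping consecutive terms, using $g(2j+2) \le g(2j)$, into a manifestly nonnegative expression on $[1/2,1]$. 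The main obstacle is precisely this last bookkeeping — converting the ``two-step'' monotonicity of the return-probability $g$ into genuine monotonicity of $F_n$ on the restricted interval $[1/2,1]$, where the restriction to $x \ge 1/2$ is exactly what is needed to control the sign of the leftover boundary terms in the telescoping/derivative computation.
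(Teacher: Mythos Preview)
Your setup coincides with the paper's: both condition on the number of zero (equivalently, nonzero) steps, writing $F_n(x)=\sum_{j}a_{n-j}\,g_j(x)$ with $g_j(x)=\binom{n}{j}x^j(1-x)^{n-j}=\Q(Y_n(x)=j)$ and $a_{2\ell}=\binom{2\ell}{\ell}4^{-\ell}$ the simple-walk return probability; both note that $a_{2\ell}$ is decreasing in $\ell$ (your ``two-step monotonicity of $g$''), and both recognise that this alone does not finish the argument because of the parity obstruction, so that the restriction $x\ge 1/2$ must enter somewhere.

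The gap is that you stop precisely at that point. The assertion that the derivative ``telescopes after regrouping consecutive terms \dots\ into a manifestly nonnegative expression on $[1/2,1]$'' is not substantiated: Abel-summing over the even values gives
\[
F_n(x)=\sum_{j\ge0}\bigl(a_{2j}-a_{2j+2}\bigr)\,\Q\bigl(B_n(x)\in\{0,2,\dots,2j\}\bigr)
\]
with nonnegative weights $a_{2j}-a_{2j+2}$, but one is then left to prove that each partial even mass $\Q\bigl(B_n(x)\in\{0,2,\dots,2j\}\bigr)$ is increasing in $x$ on $[1/2,1]$, which is not obvious and is of the same flavour as the original claim. The paper closes the argument differently and concretely: it uses the sign pattern $g_j'(x)\gtrless0$ according as $j\gtrless nx$ to replace every $a_{n-j}$ in $F_n(y)-F_n(x)$ by the single value $a_{n-\lfloor nx\rfloor}$ (the inequality goes the right way on each side of the split), reducing the question to the monotonicity of $\sum_{j\equiv n\bmod 2}g_j(x)$; this sum is computed exactly as
\[
\tfrac12\bigl[(x+(1-x))^n+(x-(1-x))^n\bigr]=\tfrac12\bigl(1+(2x-1)^n\bigr),
\]
which is visibly increasing on $[1/2,1]$, and this is exactly where the hypothesis $x\ge1/2$ finally enters. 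That identity (or an equivalent device) is the missing ingredient in your plan.
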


\begin{proof}
We want to prove that for $1/2 \leq x \leq y \leq 1$ we have $F_n(y)-F_n(x) \geq 0$.
To do that we will write $F$ as a sum and analyze the terms of the sum separately.
Define
\begin{equation} \label{def: Y}
Y_n(x) := \#\{i \in [n]:\zeta_i(x)=0\},
\end{equation} 
so we can write
\[F_n(x) = \sum_{j=0}^n \Q(Z_n(x)=0|Y_n(x)=j)\Q(Y_n(x)=j).\]

Let us now analyze each part of the sum. We first observe that
\begin{displaymath}
a_{n-j} := \Q(Z_n(x)=0|Y_n(x)=j) = \left\{ \begin{array}{ll}
\displaystyle 0 & \textrm{if $n-j \equiv 1 \mod 2$}\\
  \binom{n-j}{{(n-j)}/{2}} \times \dfrac{1}{2^{n-j}} & \textrm{if $n-j \equiv 0 \mod 2$}
\end{array}\right. .
\end{displaymath}
Observe also that $a_{n-j}$ is well defined because it does not depend on $n$ or $j$, but only on  $n-j$.
It is also easy to see that for any $0 \leq k \leq \dfrac{n}{2} - 1$ we have $ a_{2k} > a_{2k+2}$ and thus $a_{2k} \geq a_{2\ell}$ for all $0 \leq k \leq \ell \leq n/2$.

Now, let us analyze the behavior of the function $g_j$ given by
\begin{equation} \label{def:gjx}
g_j(x) := \Q(Y_n(x)=j) = \binom{n}{j} x^j(1-x)^{n-j}.
\end{equation}
Taking the derivative, we have 
\[ g'_j(x) = \binom{n}{j} x^j(1-x)^{n-j} \left(\frac{j}{x} - \dfrac{n-j}{1-x} \right), \]
so that, for $|y-x|$ sufficiently small and $y>x> 1/2$,  we have
\begin{align}
g_j(y)-g_j(x) < 0, &\ \ \mbox{if $j \leq nx$}, \label{equation:g-cresce} \\
g_j(y)-g_j(x) > 0, &\ \ \mbox{if $j > nx$}. \label{equation:g-decresce}
\end{align}

Let $N = \{ 0 \leq j \leq n : j \equiv n \mod 2\}$; then 
\[F_n(y) - F_n(x) = \sum_{j \in N} [g_j(y) - g_j(x)] a_{n-j},\]
and using the fact that $\{a_{2\ell}\}_{0 \leq \ell \leq n/2}$ is decreasing, and (\ref{equation:g-cresce}) and (\ref{equation:g-decresce}) we have
\begin{align*}
F_n(y) - F_n(x) &= \sum_{j \leq  nx , j \in N} [g_j(y) - g_j(x)] a_{n-j}  +  \sum_{ j > nx , j \in N} [g_j(y) - g_j(x)]  a_{n-j} \\
& \geq \left[ \sum_{j \leq  nx , j \in N} [g_j(y) - g_j(x)] + \sum_{ j >  nx , j \in N} [g_j(y) - g_j(x)] \right] a_{n-\lfloor nx \rfloor}\\
& \geq \left[ \sum_{ j \in N} g_j(y) - \sum_{ j \in N} g_j(x) \right] a_{n-\lfloor nx \rfloor}.
\end{align*}
Now note that $\displaystyle \sum_{ j \in N} g_j(y)$ can be obtained as the sum of the expansion of two binomials
\[ \displaystyle \sum_{ j \in N} g_j(y) = \frac{1}{2}[ (y-(1-y))^{n} + (y+(1-y))^{n}] = \frac{1+(2y-1)^{n}}{2}\]
and hence  $\displaystyle \sum_{ j \in N} g_j(x)$ is an increasing function in $[1/2,1]$. It follows that $F_n(y)-F_n(x) \geq 0$.
\end{proof}

\begin{proof}[Theorem 2] We want to show that, the value of $\lambda$  is maximal when the positive entries of the vector are packed in $m$ coordinates. Let 
\[\mathcal{A} := \left\{ q = (q_1, \dots, q_d): 0 \leq q_i \leq \frac{1}{m}, \; \forall i=1,\dots, d \: \mbox{and} \; \sum q_i =1 \right \},\]
and given $q \in \mathcal{A}$ let
\[B(q) := \#\Big\{ i \in [d]: q_i \notin \{0, 1/m\} \Big\}.\]

With that in mind we will define a packing algorithm $A: \mathcal{A} \to \mathcal{A} $ which increases the value of $\lambda$ in each step. 

Let $q\in \mathcal{A}$. If $B(q)=0$, then define $A(q)=q^*$. If $B(q)>0$, then necessarily $B(q) \geq 2$, and suppose without loss of generality that neither $q_1$ nor $q_2$ belongs to $\{0, 1/m \}$, so we define $A(q) = (q'_1, \ldots, q'_d)$ where $q'_i=q_i$ for all $i\geq 3$, and
\begin{flalign*}
\text{for } q_{1}+q_{2} \leq 1/m, &\text{ we let } q'_{1} = q_{1}+q_{2} \text{ and } q'_{2} = 0, \\
\text{for }q_{1}+q_{2} > 1/m, &\text{ we let }
q'_{1} = q_{1}+q_{2}-1/m \text{ and } q'_{2} = 1/m, .
\end{flalign*}

We claim that the this algorithm has the following properties

\begin{enumerate}
\item if $B(q)=0$ then $B(A(q)) = 0$;

\item if $B(q)>0$ then $B(A(q)) < B(q)$;

\item $\lambda(A(q)) \geq \lambda(q)$.
\end{enumerate}


Properties 1 and 2 follow from the definition and we proceed with the proof that Property 3 holds.

We now compare the probabilities of the random walks associated with $q$ and $A(q)$. By Lemma \ref{lemma:passeio-projecao} we have 
\begin{multline*}
\Q(S_n^1(q) = S_n^2(q)) = \\ \sum_{\substack{(j,k) \in \N^2 \\ j+k=n}} \Q(\tilde{S}_n^1(q)=\tilde{S}_n^2(q) = (j,k)) \Q(R_j^1(q) = R_j^2(q)) \Q(U_k^1(q) = U_k^2(q)). 
\end{multline*} 

Observe that $\Q(U_k^1(q) = U_k^2(q))$ depends only on the last $d-2$ coordinates of $q$, so that
\[\Q\Big(U_k^1(q) = U_k^2(q)\Big) = \Q\Big(U_k^1(A(q)) = U_k^2(A(q))\Big).\]

Analogously, $\Q(\tilde{S}_n^1(q)=\tilde{S}_n^2(q) = (j,k))$ depends only on $q_1+q_2$ and since this sum is invariant by $A$ we have
\[ \Q\Big(\tilde{S}_n^1(q)=\tilde{S}_n^2(q) = (j,k)\Big) = \Q\Big(\tilde{S}_n^1(A(q))=\tilde{S}_n^2(A(q)) = (j,k)\Big). \]

We will now prove that 
\begin{equation} \label{finelquality}
\Q\Big(R_j^1(q) = R_j^2(q)\Big) \leq \Q\Big(R_j^1(A(q)) = R_j^2(A(q))\Big),    
\end{equation}
and it will follow that
\[ \lambda(q) = \sum_{n=1}^{\infty}\Q\Big(S_n^1(q) = S_n^2(q)\Big) \leq \sum_{n=1}^{\infty}\Q\Big(S_n^1(A(q)) = S_n^2(A(q))\Big) = \lambda(A(q)) .\]

To prove  (\ref{finelquality}) we observe that for each $q \in \mathcal{A}$, although $R_n^1(q) - R_n^2(q)$ is defined as a random walk on $\Z^2$, its trace lies on the secondary diagonal of $\Z^2$, and it has the distribution of a lazy random walk  $\{Z_n(x)\}_n$ as defined in Lemma \ref{lemma:f-increasing} with  $x = \frac{q_1^2+q_2^2}{(q_1+q_2)^2} \geq 1/2$. Analogously $R_n^1(A(q)) - R_n^2(A(q))$ has the same distribution as $\{Z_n(x')\}_n$, with $x' = \frac{(q_1')^2+(q_2')^2}{(q_1'+q_2')^2} \geq x$. Now,  (\ref{finelquality}) follows from Lemma \ref{lemma:f-increasing}.

Finally, we observe that for all $q\in {\cal A}$ we have $A^d(q)=q^*$,  where $A^d$ is the $d$-th iterated of $A$. Hence, by Property 3, we get $\lambda(q^*) = \lambda(A^d(q)) \geq \lambda(q)$ and this finishes the proof.

\end{proof}

\section{Final comments}\label{final}

We do not know whether Condition 2 of Theorem \ref{main}, the upper bound on the probabilities $p_i$, is only a technical limitation or if a big discrepancy on the anisotropy prevents the system to behave as in mean-field conditions even in arbitrary large dimensions. In any case, the isotropic case shows that some bound on the probabilities $p_i$ must be required as we explain now.

One of the  results in \cite{CD}, states that the isotropic critical parameter satisfies $p_c(d) \geq 1/d+1/(2d^3)+ o(1/d^3)$. Let now each $p_i=1/d+1/(3d^3)$, and take $d_0$ so that $p_i<p_c(d_0).$ In this case $\epsilon=1/(3d_0^2)$ and $p_i=\sqrt{3\epsilon}(1+\epsilon)$.

A natural related  question is whether anisotropic non-oriented percolation has the same limiting critical surface behavior, i.e., under which conditions can we guarantee that the critical surfaces stay close to the isotropic critical parameters in high dimensions.

{\bf Acknowledgments} P.A.G. has been supported by CAPES, A.P. has been supported by a PNPD/CAPES grant and R.S. has been partially supported by Conselho Nacional de Desenvolvimento Cient\'\i fico e Tecnol\'{o}gico (CNPq)  and by FAPEMIG (Programa Pesquisador Mineiro), grant PPM 00600/16.

\end{document}